\newcommand\Aff{{\mathchoice {\setbox0=\hbox{$\displaystyle\rm Aff$}}
{\hbox{$\textstyle\rm Aff$}}
{\hbox{$\scriptstyle\rm Aff$}}
{\hbox{$\scriptscriptstyle\rm Aff$}}}}
\newtheorem{question}{Question}
\def\PSL{\mathop{\rm PSL}\nolimits}
\def\Sz{\mathop{\rm Sz}\nolimits}
\def\on{\mathord{\mathrm{O}^{^{\textstyle\mathrm{,}}}\mathrm{N}}}
\def\mat#1{\mathord{\mathrm{M}_{#1}}}
\def\co#1{\mathord{\mathrm{Co}_{#1}}}
\def\fis#1{\mathord{\mathrm{Fi}_{#1}}}
\def\jan#1{\mathord{\mathrm{J}_{#1}}}
\def\baby{\mathord{\mathrm{B}}}
\def\monster{\mathord{\mathrm{M}}}
\def\mcl{\mathord{\mathrm{M}^{\scriptstyle\mathrm{c}}\mathrm{L}}}
\begin{document}

\title[Large Noether number]{Finite groups with large Noether number are almost cyclic}


\thanks{The research was partly supported by the National Research, Development and Innovation Office (NKFIH) Grant No.~K115799. The second and third authors were also funded by the National Research, Development and Innovation Office (NKFIH) Grant No.~ERC$\_$HU$\_$15 118286. Their work on the project leading to this application has received funding from the European Research Council (ERC) under the European Union's Horizon 2020 research and innovation programme (grant agreement No 741420). The second author received funding from ERC 648017 and was supported by the J\'anos Bolyai Research Scholarship of the Hungarian Academy of Sciences.}

\author{\firstname{P\'al} \lastname{Heged\H us}}
\address{Department of Mathematics\\
Central European University\\
N\'ador utca 9\\
H-1051 Budapest, Hungary}
\email{hegedusp@ceu.edu}

\author{\firstname{Attila} \lastname{Mar\'oti}}
\address{Alfr\'ed R\'enyi Institute of Mathematics\\
Hungarian Academy of Sciences\\
Re\'altanoda utca 13-15\\
H-1053, Budapest, Hungary}
\email{maroti.attila@renyi.mta.hu}

\author{\firstname{L\'aszl\'o} \lastname{Pyber}}
\address{Alfr\'ed R\'enyi Institute of Mathematics\\
Hungarian Academy of Sciences\\
Re\'altanoda utca 13-15\\
H-1053, Budapest, Hungary}
\email{pyber.laszlo@renyi.mta.hu}

\keywords{polynomial invariants, Noether bound, simple groups of Lie type}
\altkeywords{polynomes invariants, majorant de Noether, groupes simples de type de Lie}

\subjclass[2010]{13A50, 20D06, (20D08, 20D99)}
\date{\today}

\begin{abstract}
Noether, Fleischmann and Fogarty proved that if the characteristic of the underlying field does not divide the order $|G|$ of a finite group $G$, then the polynomial invariants of $G$ are generated by polynomials of degrees at most $|G|$. Let $\beta(G)$ denote the largest indispensable degree in such generating sets. Cziszter and Domokos recently described finite groups $G$ with $|G|/\beta(G)$ at most $2$.
We prove an asymptotic extension of their result. Namely, $|G|/\beta(G)$ is bounded for a finite group $G$ if and only if $G$ has a characteristic cyclic subgroup of bounded index. In the course of the proof we obtain the following surprising result. If $S$ is a finite simple group of Lie type or a sporadic group then we have $\beta(S) \leq {|S|}^{39/40}$. We ask a number of questions motivated by our results.
\end{abstract}

\begin{altabstract}Noether, Fleischmann et Fogarty ont montr\'e que si le caract\'eristique du corps sous-jacent ne divise pas l'ordre $|G|$ d'un groupe fini, alors l'anneau de polynomes invariants de $G$ est engendr\'e par des polynomes de degr\'e au plus \'egal \`a $|G|$.
Notons par $\beta(G)$ le plus haut degr\'e indispensable en un tel syst\`eme de g\'en\'erateurs. Cziszter et Domokos ont r\'ecemment d\'ecrit les groupes finis $G$ tels que $|G|/\beta(G)$ est au plus \'egal \`a $2$.  Nous d\'emontrons une extension asymptotique de leur r\'esultat, \`a savoir que $|G|/\beta(G)$ est born\'e pour un groupe fini $G$ si et seulement s'il admet un sous-groupe caract\'eristique cyclique d'indice born\'e. Durant la d\'emonstration nous trouvons le r\'esultat surprenant suivant :
si $S$ est un groupe fini simple de type de Lie ou l'un des groupes sporadiques alors on a
$\beta(S) \leq {|S|}^{39/40}$. Nous posons \'egalament quelques questions motiv\'ees par nos r\'esultats.
\end{altabstract}

\maketitle

\section{Introduction}

Let $G$ be a finite group and $V$ an $FG$-module of finite dimension over a field $F$. By a classical theorem of Noether \cite{noether}, the algebra of polynomial invariants on $V$, denoted by $F[V]^G$, is finitely generated. Define $\beta(G, V)$ to be the smallest integer $d$ such that $F[V]^G$ is generated by elements of degrees at most $d$. In case the characteristic of $F$ does not divide $|G|$, the numbers $\beta(G,V)$ have a largest value as $V$ ranges over the finite dimensional $FG$-modules. This number is called the \emph{Noether number} and is denoted by $\beta(G)$. The notation $\beta(G)$ suppresses the dependence on the field but it should not cause misunderstanding. In fact, for fields of the same characteristic the Noether number is the same and we may assume that $F$ is algebraically closed. See \cite{knop} for details.

Noether \cite{noether} also proved that $\beta(G)\leq |G|$ over fields of characteristic $0$. This bound was verified independently by Fleischmann \cite{fleischmann} and Fogarty \cite{fogarty} to hold also in positive characteristics not dividing $|G|$. For characteristics dividing $|G|$, a deep result of Symonds \cite{symonds} states that $\beta(G,V)\leq\dim(V)(|G|-1)$.

From now on throughout the whole paper, except in Question \ref{mod}, we assume that the characteristic of the field $F$ is $0$ or is coprime to the order of $G$.

Schmid \cite{schmid} proved that over the field of complex numbers $\beta(G)=|G|$ holds only when $G$ is cyclic. This was sharpened by Domokos and Heged\H us \cite{DH} (and later by Sezer \cite{sezer} in positive coprime characteristic) to $\beta(G)\leq \frac{3}{4}|G|$ unless $G$ is cyclic.

An important ingredient in Schmid's argument was to show that $\beta(G)\geq\beta(H)$ holds for any subgroup $H\leq G$. In particular, $\beta(G)$ is bounded from below by the maximal order of the elements in $G$, that is, the \emph{Noether index} $n(G) = |G|/\beta(G)$ of a finite group $G$ is at most the minimal index of a cyclic subgroup in $G$.

Recently Cziszter and Domokos \cite{CzD} described finite groups $G$ with $n(G)$ at most $2$. Their deep result \cite[Theorem 1.1]{CzD} states that for a finite group $G$ (with order not divisible by the characteristic of $F$) we have $n(G) \leq 2$ if and only if $G$ has a cyclic subgroup of index at most $2$, or $G$ is isomorphic to $Z_3\times Z_3$, $Z_2\times Z_2\times Z_2$, the alternating group $A_4$, or the binary tetrahedral group $\widetilde{A_4}$. In particular, the inequality $n(G) \leq 2$ implies that $G$ has a cyclic subgroup of index at most $4$.

Our main result is as follows.

\begin{theo}
\label{mainmainindex}
Let $G$ be a finite group with Noether index $n(G)$. Then $G$ has a characteristic cyclic subgroup of index at most ${n(G)}^{10 \log_{2}k}$ where $k$ denotes the maximum of $2^{10}$ and the largest degree of a non-Abelian alternating composition factor of $G$, if such exists. Furthermore if $G$ is solvable, then $G$ has a characteristic cyclic subgroup of index at most ${n(G)}^{10}$.
\end{theo}

In view of Theorem \ref{main} and Section \ref{almostsimplesection}, the bound ${n(G)}^{10}$ holds even for a large class of non-solvable groups.

Theorem \ref{mainmainindex} has a consequence which can be viewed as an asymptotic version of the afore-mentioned result of Cziszter and Domokos.

\begin{coro}
\label{corollary}
Let $G$ be a finite group with Noether index $n(G)$. If $G$ is nonsol-vable, then $n(G)>2.7$ and $G$ has a characteristic cyclic subgroup of index at most $n(G)^{100 + 10 \log_{2} \log_{2} n(G)}$. If $G$ is solvable then $G$ contains a characteristic cyclic subgroup of index at most $n(G)^{10}$.
\end{coro}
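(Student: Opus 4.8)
The plan is to derive the corollary from Theorem \ref{mainmainindex} by translating the exponent $10\log_2 k$ into an explicit bound in terms of $n(G)$ alone, together with a separate lower bound $n(G)>2.7$ in the nonsolvable case. The solvable case is immediate: Theorem \ref{mainmainindex} already gives a characteristic cyclic subgroup of index at most $n(G)^{10}$, so nothing further is needed.

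For the nonsolvable case, the first step is to establish the lower bound $n(G)>2.7$. Since $n(G)\geq n(H)$ is false in general but the Noether number is monotone on subgroups ($\beta(G)\geq\beta(H)$), one cannot argue purely by restriction; instead I would invoke the Cziszter--Domokos classification quoted in the introduction: if $n(G)\leq 2$ then $G$ has a cyclic subgroup of index at most $4$, hence is solvable (all the exceptional groups $Z_3\times Z_3$, $Z_2^3$, $A_4$, $\widetilde{A_4}$ are solvable as well). So every nonsolvable group has $n(G)>2$; to push this to $2.7$ one needs a sharper input — presumably a result proved later in the paper (in the spirit of Theorem \ref{main} / Section \ref{almostsimplesection}) showing $\beta(S)\leq |S|^{39/40}$ for simple groups of Lie type and sporadic groups, combined with the fact that the smallest nonabelian simple group $A_5$ forces $n(G)$ to exceed some explicit constant $>2.7$ for any nonsolvable $G$. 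This is the step I expect to require the most care: one must check that no nonsolvable group sneaks in with $2<n(G)\leq 2.7$, which means combining the simple-group bound with an analysis of how $n$ behaves under extensions and products.

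The second step is the exponent. Let $d$ be the largest degree of a nonabelian alternating composition factor of $G$ (if any), and $k=\max(2^{10},d)$. A classical fact is that $A_d\leq G$ as a section forces $\beta(G)\geq\beta(A_d)\geq d$ (even $\beta(A_d)$ is at least the maximal element order, which is roughly $e^{\sqrt{d\log d}}$, but the crude bound $d$ suffices), hence $n(G)=|G|/\beta(G)$; more usefully one shows $n(G)\geq 2^{c d}$ or at least $n(G)\geq$ some increasing function of $d$, giving $d\leq C\log_2 n(G)$ for an absolute constant $C$. Then $k=\max(2^{10},d)\leq \max(2^{10}, C\log_2 n(G))$, so for $n(G)$ large we get $\log_2 k\leq \log_2 C+\log_2\log_2 n(G)$, whence the exponent $10\log_2 k$ in Theorem \ref{mainmainindex} is at most $100+10\log_2\log_2 n(G)$ after absorbing constants (the ``$100$'' covers both the $2^{10}$ floor and the $\log_2 C$ term, using $n(G)>2.7$ so that $\log_2\log_2 n(G)$ is bounded below). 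Substituting into Theorem \ref{mainmainindex} yields the claimed index bound $n(G)^{100+10\log_2\log_2 n(G)}$, completing the proof.

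The only genuinely delicate point is the quantitative link between $d$ and $n(G)$: one must ensure that a large alternating composition factor really does force $n(G)$ to grow, uniformly over all $G$ containing $A_d$ as a composition factor. I would handle this by the subgroup-monotonicity of $\beta$ together with the observation that a composition factor $A_d$ gives rise to a subquotient, and then use a lemma (proved elsewhere in the paper) controlling $n$ under passage to subquotients, or alternatively argue directly that $|G|\geq |A_d|$ while $\beta(G)$ cannot be too close to $|G|$ unless $G$ is almost cyclic — precisely the content of the main theorem applied inductively. Everything else is bookkeeping with logarithms and the explicit constants in Theorem \ref{mainmainindex}.
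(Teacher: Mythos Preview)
Your overall architecture is right --- reduce to Theorem \ref{mainmainindex} and bound $k$ in terms of $n(G)$ --- but the proposal contains a concrete error that also happens to be the key missing ingredient. You assert that ``$n(G)\geq n(H)$ is false in general'' and therefore ``one cannot argue purely by restriction.'' In fact the opposite is true: Lemma \ref{seged} gives $\beta(G)\leq |G:H|\,\beta(H)$ for any subgroup $H$, which is exactly $n(G)\geq n(H)$; and $\beta(G)\leq\beta(N)\beta(G/N)$ gives $n(G)\geq n(N)\,n(G/N)\geq n(G/N)$. Together these yield $n(G)\geq n(S)$ for \emph{any section} $S$ of $G$. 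This single observation dissolves both of the steps you flagged as delicate.

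For $n(G)>2.7$: the paper does not go through the Cziszter--Domokos classification at all. It simply cites Lemma \ref{czd_simple}, whose proof is that any nonsolvable $G$ has a minimal simple group as a section, and for each such group one checks $n>2.7$ directly. Your proposed route via ``$\beta(S)\leq |S|^{39/40}$ for Lie type and sporadic, plus special analysis near $A_5$'' is both more laborious and unnecessary.

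For the exponent: the ``quantitative link between $d$ and $n(G)$'' that you describe as genuinely delicate is in fact a two-line argument once section-monotonicity of $n$ is in hand. Section \ref{three} shows $\log_2 n(A_k)>k/10$ for $k>10$, hence $k<10\log_2 n(A_k)\leq 10\log_2 n(G)$; taking $\log_2$ gives $\log_2 k<\log_2 10+\log_2\log_2 n(G)$, so $10\log_2 k<100+10\log_2\log_2 n(G)$. If instead $k=2^{10}$ then $10\log_2 k=100$ and the bound holds because $n(G)>2.7$ forces $\log_2\log_2 n(G)>0$. There is no need for an inductive application of the main theorem or any control of $\beta(G)$ versus $|G|$ as you suggest in your final paragraph.
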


It is an open question whether there exists a polynomial bound in $n(G)$ for the index of a characteristic cyclic subgroup in an arbitrary finite group $G$. Theorem \ref{mainmainindex} is a major step in answering this question.

As a step in our proofs we obtain a result which may be of independent interest.

\begin{theo}
\label{simple}
Let $S$ be a finite simple group of Lie type or a sporadic simple group. Then $n(S) \geq |S|^{1/40}$.
\end{theo}

It would be interesting to know if the bound in Theorem \ref{simple} holds for alternating groups of arbitrarily large degrees. Our methods are sufficient only for degrees up to $17$. For degrees no greater than $17$ (but at least $5$) the claim follows from the remark after Lemma \ref{czd_simple}.

Assume that, for some fixed constant $\epsilon > 0$, we have $n(S) \geq {|S|}^{\epsilon}$ for every alternating group $S$ of degree at least $5$. Then our proofs show that, for some other (computable) fixed constant $\epsilon' > 0$ with $\epsilon' \leq 0.1$, any finite group $G$ has a characteristic cyclic subgroup of index at most ${n(G)}^{1/\epsilon'}$.

\section{Affine groups}
\label{semisection}

Our main aim in the present section is to give upper bounds on $\beta(G)$ for the Frobenius group $G\cong Z_p\rtimes Z_n$, where $p$ is a prime and $n\mid p-1$.

It is an open conjecture of Pawale \cite{pawale} that $\beta(Z_p\rtimes Z_q)=p+q-1$ for a prime $q$. This is verified for $q=2$ \cite{schmid} (where $\beta(D_{2n})=n+1$ is shown for composite $n$, as well) and for $q=3$ \cite{cz3p}. Cziszter and Domokos obtain an upper bound which we extend to a more general one if $q$ is not a prime. See Lemma~\ref{q08}, Theorem~\ref{prootn} and Corollary~\ref{pointnine}.

In this section we rely heavily on the techniques developed by Cziszter and Domokos. For convenience and completeness we include here those that we need. However, we try to simplify and not include them in full generality.

Let $G$ be the Frobenius group of order $pn$ with $Z_p\le G\le \Aff_p$. Then every $G$-module has a $Z_p$-eigenbasis permuted up to scalars by $G$.  The regular module is relevant because every irreducible $Z_p$-character occurs in it. For every $Z_p$-module $V$ the polynomial invariants are linear combinations of $Z_p$-invariant monomials. The $Z_p$-invariant monomials correspond to $0$-sum sequences of irreducible $Z_p$-characters. These motivate all the definitions below.

Let  ${\mathcal Y}=\{y_1,\ldots,\, y_p\}$ be the set of variables from $F[Z_p]$ that are $Z_p$-eigenvectors and $y_1$ is $Z_p$-invariant. For a monomial $f=\prod_{i=1}^p y_i^{a_i}$ let us define $b(f)=\prod_{a_i>0}y_i$. Let $g_1=b(f)$ and construct recursively the finite list of monomials $g_1,\,g_2,\ldots$ in such a way that $g_{k+1}=b(f/\prod_{j=1}^k g_j)$ for every $k$, stopping if $f=\prod g_j$.  We call this list the \emph{row decomposition} of $f$. (In \cite{CzD} the corresponding list of irreducible $Z_p$-characters is considered and called the row decomposition.) This list consists of monomials each dividing the previous one and the exponent of every variable $y_i$ is at most $1$.

Let $l$ be a positive integer. Suppose a set of variables $\{x_1,\ldots,\,x_l\}$ consists of $Z_p$-eigenvectors on which $G/Z_p$ acts by permutation, but not necessarily transitively. 
For each $x_i$ there is a corresponding unique $y_{\bar{i}}\in\mathcal Y$ having the same $Z_p$-action on them.
This defines a map $m\mapsto f_m$ from the monomials in $\{x_1,\ldots,\,x_l\}$ to the monomials in $\mathcal Y$ by $m=\prod x_i^{a_i}\mapsto m_f=\prod y_{\bar i}^{a_i}$. This map is $G/Z_p$-equivariant. Moreover, the $Z_p$-action on $m$ is the same as on $f_m$, so $m$ is $Z_p$-invariant if and only if $f_m$ is.

Given a monomial $m$ we determine the row decomposition $g_1,\ldots,\,g_h$ of $f_m$. Suppose that for every $G$-orbit ${\mathcal O}\subseteq\mathcal Y$ and every index $i<h$ the following holds. If $g_i$ involves some variables from  $\mathcal O$, but not all then $g_{i+1}$ involves fewer variables than $g_i$ does. Such a monomial $m$ is called \emph{gapless} in \cite[Definition~2.5]{CzD}. If $g_i=g_{i+1}$ for a gapless monomial $m$ then $g_i$ is $G/Z_p$-invariant. In particular, as nontrivial $G/Z_p$-orbits on $\mathcal Y$ are of length $n$,
\begin{equation}\text{if }y_1\nmid g_i\text{ and }\deg(g_i)<n\text{ then }\deg(g_{i+1})<\deg(g_i).\label{below_n}
\end{equation}

Let $M =\oplus_{d=0}^\infty M_d$ be a graded module over a commutative graded $F$-algebra $R=\oplus_{d=0}^\infty R_d$. We also assume that $R_0 = F$ when $1\in R$ and
$R_0 = {0}$ otherwise. 
Define $M_{\leq s} =\oplus_{d=0}^s M_d$, a subspace of $M$, and $R_+ =\oplus_{d=1}^\infty R_d\triangleleft R$ a maximal ideal. The subalgebra of $R$ generated by $R_{\leq s}$  is denoted by $F[R_{\leq s}]$. Define $\beta(M, R)=\min\{s\mid M=\langle M_{\leq s}\rangle_{R_+}\}$, the highest degree needed for an $R_+$-generating set of $M$. In other words, it is the highest degree of nonzero components of $M/MR_+$ (the factor space $M/MR_+$ inherits the grading).

The following three propositions from \cite{CzD} will be used in the proof of Theorem~\ref{prootn}. They are paraphrased and not stated in their full generality.
\begin{prop}\cite[Proposition 2.7]{CzD}\label{cd3.7}
Let $G$ be the Frobenius group of order $pn$ with $Z_p\le G\le \Aff_p$. Let $V$ be an $FG$-module, $L=F[V]$ the polynomial algebra, $R=L^G$ its invariants. Suppose the variables of $L$ are permuted by $G$ up to non-zero scalar multiples. Then the vector space $L_+/L_+R_+$ is spanned by monomials of the form $b_1\cdots b_r m$, where the $b_i$ are $Z_p$-invariant of degree $1$ or of prime degree $q_i|n$ and $m$ has a gapless divisor of degree at least $\deg(m)-(p-1)$.
\end{prop}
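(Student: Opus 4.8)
The plan is to analyze an arbitrary $Z_p$-invariant monomial $m = \prod x_i^{a_i}$ in $L$ and to show that, modulo $L_+ R_+$ (that is, modulo products of two invariants of positive degree), it can be rewritten in the stated form. The starting point is the observation recalled in the text: since $G/Z_p$ permutes the variables $x_1,\dots,x_l$ up to scalars and these map $G/Z_p$-equivariantly onto the eigenvectors $\mathcal Y$, one may pass to the associated monomial $f_m$ in $\mathcal Y$ and study its row decomposition $g_1,\dots,g_h$. So the first step is to set up this correspondence carefully and to reduce the problem to a statement purely about the row decomposition, tracking throughout which rows are $G/Z_p$-invariant (hence give, after lifting to the $x_i$ and symmetrizing, honest $G$-invariant factors $b_i$) and which are not.

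The second step is to extract from the row decomposition the $Z_p$-invariant building blocks $b_1,\dots,b_r$. Each row $g_i$ is a squarefree monomial dividing $g_{i-1}$; whenever $y_1 \mid g_i$ we can peel off the invariant variable, and whenever a row $g_i$ is $G/Z_p$-invariant its degree is either $1$ (if it is $y_1$) or a product of nontrivial orbit-lengths, each of which divides $n$, so after lifting it is a product of $Z_p$-invariants of degree $1$ or of prime degree $q_i \mid n$. The key point, using \eqref{below_n}, is that rows which are \emph{not} $G/Z_p$-invariant and do not contain $y_1$ must strictly drop in degree at the next step unless they have degree $\geq n$; so the rows naturally split into a "large/invariant" part, which contributes the factors $b_1\cdots b_r$ (of the required shape), and a "small, strictly decreasing" tail. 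One then argues that after stripping off the $b_i$'s, what remains corresponds to a monomial $m'$ whose row decomposition is gapless — or differs from a gapless monomial only in a controlled way — and that the total degree lost in passing from $m'$ to its gapless divisor is at most the length of the strictly decreasing tail, which is bounded by $p-1$ since each row of the tail has degree between $1$ and $p-1$ and the degrees strictly decrease. This is where the bound "$\deg(m) - (p-1)$" comes from.

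The third step is the reduction "modulo $L_+ R_+$": one must check that each time we factor off a $G/Z_p$-invariant row as a genuine $G$-invariant $b_i$, this operation is legitimate in the quotient $L_+/L_+R_+$ — i.e. that the symmetrization producing the $G$-invariant differs from the naive monomial product by terms already in $L_+ R_+$, or else that we are allowed to keep $b_i$ as a literal $Z_p$-invariant monomial factor (which is what the statement actually asserts: the $b_i$ are $Z_p$-invariant, not necessarily $G$-invariant). Reading the statement again, the $b_i$ need only be $Z_p$-invariant of degree $1$ or prime degree dividing $n$, so this bookkeeping is lighter than I first feared: one just needs that the rows contributing the $b_i$ really do have degree $1$ or prime degree dividing $n$, which follows from their $G/Z_p$-invariance together with the orbit-length divisibility.

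The main obstacle I anticipate is the combinatorial heart of step two: making precise the claim that "stripping the invariant rows leaves a gapless monomial up to a degree-$(p-1)$ correction." One has to handle rows that are partially supported on a $G$-orbit $\mathcal O$ (involving some but not all variables of $\mathcal O$) and that fail to drop in size — by definition these are exactly the "gaps," and the job is to show each such gap can be absorbed into one of the $b_i$'s or forces a degree drop accounted for within the $p-1$ budget, possibly by replacing $m$ with a different monomial of the same $Z_p$-weight (the quotient $L_+/L_+R_+$ only sees weights, so such replacements are free). Carrying out this gap-removal argument cleanly, and verifying that the degree accounting never exceeds $p-1$, is the step that will require the most care; the rest is essentially assembling the pieces and invoking \eqref{below_n}.
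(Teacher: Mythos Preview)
The paper does not prove this proposition: it is quoted (in paraphrased form) from \cite[Proposition~2.7]{CzD} and used as a black box, with only the parenthetical remark that the ``bricks'' $b_i$ of the original are $Z_p$-invariant. So there is no proof in the paper to compare your proposal against.

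That said, your sketch has two concrete problems worth flagging before you try to flesh it out. First, your mechanism for producing the $b_i$'s is off. You propose that a $G/Z_p$-invariant row $g_i$ yields factors of degree $1$ or of prime degree dividing $n$; but a $G/Z_p$-invariant squarefree monomial in $\mathcal Y$ not involving $y_1$ is a union of full $G/Z_p$-orbits, each of length exactly $n$ (the action on nontrivial $Z_p$-characters is free because $G$ is Frobenius), so its degree is a multiple of $n$, not a prime divisor of $n$. The bricks in \cite{CzD} are built differently, and you will need to consult that construction rather than read them off the rows directly.

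Second, your accounting for the bound $p-1$ is not right. You say the ``strictly decreasing tail'' has total degree at most $p-1$ because the row degrees lie in $\{1,\dots,p-1\}$ and strictly decrease; but a strictly decreasing sequence of positive integers below $p$ can have total sum as large as $\binom{p}{2}$. The quantity bounded by $p-1$ in the statement is the degree \emph{difference} between $m$ and its gapless divisor, and in \cite{CzD} this comes from a different argument (roughly, that the non-gapless part of the monomial can be confined to a single $Z_p$-period), not from summing a decreasing tail.
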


(Note that the so-called \emph{bricks} mentioned in the original version of Proposition~\ref{cd3.7} are $Z_p$-invariant.)

\begin{prop}\cite[Lemma 1.11]{CzD}
\label{cd2.31}
Let $G$ be the Frobenius group of order $pn$ with $Z_p\le G\le \Aff_p$. Let $V$ be an $FG$-module, $L=F[V]$ the polynomial algebra, $R=L^G$ and $I=L^{Z_p}$ its invariants. Then for every $s\ge 1$ the following bound is valid:
\[\beta(L_+,R)\le (n-1)s+\max\{\beta(L_+/L_+R_+,I),\beta(L_+/L_+R_+,F[I_{\leq s}])-s\}.
\]
\end{prop}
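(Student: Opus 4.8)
The plan is to run the whole argument inside the graded module $M:=L_+/L_+R_+$. First I would record its structure: since $R_+\subseteq L_+$ and $L_+R_+$ is an ideal of $L$ closed under multiplication by the subring $I$, the space $M$ is a graded $I$-module, $MR_+=0$ by construction, and — by the definition of $\beta$ recalled above — $\beta(L_+,R)$ equals the top nonzero degree of $M$. Next I would set $C:=G/Z_p$; since $Z_p\triangleleft G\le\Aff_p$, the group $C$ is cyclic of order $n$ and acts on $I=L^{Z_p}$ with fixed subring $I^C=L^G=R$. As the characteristic of $F$ does not divide $n$, fixing an isomorphism $\widehat C\cong Z_n$ yields an isotypic decomposition $I=\oplus_{a\in Z_n}I_a$ with $I_aI_b\subseteq I_{a+b}$ and $I_0=R$. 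Write $A:=\beta(M,I)$ and $B:=\beta(M,F[I_{\le s}])$; I may assume $n\ge 2$ (if $n=1$ then $R=I$ and the inequality is immediate). It then suffices to prove that $M_d=0$ whenever $d>(n-1)s+\max\{A,B-s\}$.

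The core is a two-stage reduction of an arbitrary $\overline x\in M_d$ with $d$ as above; note that $d>(n-2)s+B\ge B$. First, since $\beta(M,F[I_{\le s}])=B$ and $d>B$, we have $\overline x\in\sum_{j=1}^{s}I_jM_{d-j}$, and iterating this (the process stops because degrees are bounded below) expresses $\overline x$ as a sum of terms $i_1\cdots i_t\,g$ with $i_k\in I_{j_k}$, $1\le j_k\le s$, $g\in M$ homogeneous of degree at most $B$, and $j_1+\cdots+j_t+\deg g=d$. For such a term $j_1+\cdots+j_t=d-\deg g\ge d-B>(n-2)s$, and since each $j_k\le s$ this forces $t\ge n-1$. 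Second, I would annihilate each term in $M$: after expanding every $i_k$ into its $C$-isotypic components it is enough to kill $(i_1)_{a_1}\cdots(i_t)_{a_t}\,g$ for all $a_1,\dots,a_t\in Z_n$. If $t\ge n$, then among the $n+1$ partial sums $0,\,a_1,\,a_1+a_2,\,\dots,\,a_1+\cdots+a_n$ in $Z_n$ two coincide, so some block satisfies $a_{u+1}+\cdots+a_v\equiv 0$ with $1\le v-u$ and $v\le n$; then $(i_{u+1})_{a_{u+1}}\cdots(i_v)_{a_v}$ is a positive-degree element of $I_0=R$, hence of $R_+$, and multiplying it into $g$ gives $0$ since $MR_+=0$. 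If $t=n-1$, then $\deg g=d-(j_1+\cdots+j_{n-1})\ge d-(n-1)s>A=\beta(M,I)$, so $g\in I_+M$; writing $g=\sum_l i'_l h_l$ with each $i'_l$ homogeneous of positive degree and $h_l\in M$, and decomposing each $i'_l$ isotypically, we reduce to killing $(i_1)_{a_1}\cdots(i_{n-1})_{a_{n-1}}(i'_l)_{c}\,h_l$ for $c\in Z_n$; the same pigeonhole applied to the $n+1$ partial sums of $a_1,\dots,a_{n-1},c$ again produces a positive-degree sub-product lying in $R_+$, so the term vanishes modulo $MR_+=0$. Hence $\overline x=0$, so $M_d=0$, which gives $\beta(L_+,R)\le(n-1)s+\max\{A,B-s\}$.

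I expect the one genuinely delicate point to be the degree bookkeeping, and the shape of the bound is exactly what makes it work: one needs $d>(n-2)s+B$ to force every term produced in the first stage to carry at least $n-1$ small factors, and $d>(n-1)s+A$ to force a term with exactly $n-1$ such factors to have a tail $g$ of degree above $\beta(M,I)$, from which one more invariant factor can be extracted; these two requirements combine into $d>(n-1)s+\max\{A,B-s\}$, the shift by $s$ being the largest possible degree of a single small invariant. The only combinatorial input is that the Davenport constant of $Z_n$ equals $n$ — equivalently, the pigeonhole statement on partial sums used above. It is worth noting that the argument uses nothing about the polynomial ring $L$ or the module $V$ beyond formalities: it works verbatim for any graded module $M$ with $MR_+=0$ over a graded ring $I$ whose invariant subring $R=I^C$ is taken with respect to a cyclic group $C$ of order $n$, over a field of characteristic prime to $n$.
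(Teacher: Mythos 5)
Your argument is correct, and it is essentially the proof of the cited result: the paper itself does not prove Proposition~\ref{cd2.31} (it is quoted from Cziszter--Domokos, Lemma~1.11), and your route --- pass to $M=L_+/L_+R_+$ with $MR_+=0$, expand over $F[I_{\le s}]$ to force at least $n-1$ factors of degree at most $s$, then use the $Z_n$-grading of $I=L^{Z_p}$ with $I_0=R$ and the prefix-sum/Davenport-constant pigeonhole to extract a positive-degree factor in $R_+$ --- is the same mechanism as theirs, including the correct degree bookkeeping that produces the term $(n-1)s$ and the shift by $s$. The only point worth flagging is that the isotypic decomposition of $I$ indexed by $Z_n$ needs $F$ to contain the $n$-th roots of unity; this is harmless here because of the paper's standing reduction to an algebraically closed base field, but it should be said.
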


(The original version of Proposition~\ref{cd2.31} 
holds for the generalized Noether numbers $\beta_r$, however we only use the case $r=1$.)

\begin{lemm}\cite[Lemma 2.10]{CzD} Let $S$ be a sequence over $Z_p$ with maximal multiplicity $h$. If $|S|\geq p$ then $S$ has a zero-sum subsequence $T\subseteq S$ of length $|T|\leq h$.
\end{lemm}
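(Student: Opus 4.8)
The plan is to reduce the statement to the Cauchy--Davenport theorem by arranging $S$ as a ``staircase''. First I would dispose of the easy case: if $0$ occurs in $S$ then $T=(0)$ already works (its length is $1\le h$), so one may assume that every term of $S$ is nonzero; since $|S|\ge p$ and a set of distinct nonzero residues modulo $p$ has at most $p-1$ elements, the maximal multiplicity then satisfies $h\ge 2$.

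Next I would list the distinct values occurring in $S$ as $a_1,\dots,a_k$, with multiplicities $m_1,\dots,m_k\le h$, and for $1\le t\le h$ set $S_t=\{\,a_i : m_i\ge t\,\}$. Thus each $S_t$ is a set of distinct nonzero elements of $Z_p$, one has $\sum_{t=1}^{h}|S_t|=|S|\ge p$, and $S_h\ne\emptyset$ because some value has multiplicity exactly $h$. The point of this decomposition is that any choice of \emph{at most one} element from each of $S_1,\dots,S_h$ assembles into a genuine sub-multiset of $S$: if a value $a$ is selected from $c$ of these sets, say from $S_{t_1},\dots,S_{t_c}$ with $t_1<\dots<t_c$, then $m_a\ge t_c\ge c$, so $c$ copies of $a$ are available. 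Hence it suffices to find a \emph{nonempty} zero-sum selection of this transversal type, for then its length is automatically at most $h$.

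To find one, I would put $B_t=S_t\cup\{0\}$ for $1\le t\le h-1$; since $0\notin S$ this gives $|B_t|=|S_t|+1$. The Cauchy--Davenport theorem in $Z_p$ then yields
\[
\bigl|B_1+\cdots+B_{h-1}+S_h\bigr|\ \ge\ \min\Bigl\{\,p,\ \sum_{t=1}^{h-1}\bigl(|S_t|+1\bigr)+|S_h|-(h-1)\,\Bigr\}\ =\ \min\Bigl\{\,p,\ \sum_{t=1}^{h}|S_t|\,\Bigr\}\ =\ p,
\]
so this sumset is all of $Z_p$ and in particular contains $0$. Picking $y_t\in B_t$ for $t<h$ and $x_h\in S_h$ with $y_1+\cdots+y_{h-1}+x_h=0$, and letting $T$ be the sub-multiset of $S$ formed by $x_h$ together with those $y_t$ that are nonzero, I obtain a legitimate subsequence with $|T|\le h$, with zero sum, and with $T\ne\emptyset$ since $x_h\ne 0$.

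The only delicate point --- and essentially the only obstacle --- is guaranteeing that the zero-sum combination produced by Cauchy--Davenport is genuinely nonempty and genuinely a subsequence of $S$. This is exactly why I take the last summand from $S_h$ rather than from $B_h=S_h\cup\{0\}$ (forcing $x_h\ne0$, hence $T\ne\emptyset$) and why a staircase is used rather than an arbitrary partition into sets (keeping repeated picks of a value within its multiplicity). I would also note that the hypothesis $|S|\ge p$ is used only to raise the Cauchy--Davenport bound to $p$ and that it is sharp, as $p-1$ copies of a single nonzero element form a sequence with no nonempty zero-sum subsequence at all.
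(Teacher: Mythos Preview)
Your argument is correct. The paper does not actually supply a proof of this lemma; it simply quotes the statement from \cite[Lemma~2.10]{CzD} and then derives Proposition~\ref{cd1.13} as a corollary. So there is no in-paper proof to compare against.

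For what it is worth, your route---the ``staircase'' (column) decomposition $S_t=\{a_i:m_i\ge t\}$ followed by iterated Cauchy--Davenport with the $0$-augmented sets $B_t=S_t\cup\{0\}$---is the standard device for statements of this kind. The two points you flag as delicate are exactly the ones that matter: keeping the last summand in $S_h$ (rather than $B_h$) guarantees the selection is nonempty, and the nesting $S_1\supseteq\cdots\supseteq S_h$ ensures that choosing the same value $a$ from $c$ of the layers never exceeds its multiplicity, since $a\in S_{t_c}$ forces $m_a\ge t_c\ge c$. One very minor remark: in the computation $|B_t|=|S_t|+1$ you are implicitly using that every $S_t$ is nonempty (so $0\notin S_t$ genuinely adds an element); this holds because $S_t\supseteq S_h\ne\emptyset$, but it might be worth saying explicitly.
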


The following proposition is a simple corollary.

\begin{prop}
\label{cd1.13}
Suppose $f$ is a monomial in $\mathcal Y$ of degree at least $p$ such that the exponent of each $y_i\in\mathcal Y$ is at most $h$. Then $f$ has a $Z_p$-invariant submonomial $f^\prime$ such that $\deg(f^\prime)\leq h$.
\end{prop}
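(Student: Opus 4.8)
The plan is to reduce the statement to \cite[Lemma 2.10]{CzD} by translating monomials in $\mathcal Y$ into sequences over $Z_p$. For each $i$ let $\chi_i$ be the irreducible character by which $Z_p$ acts on the eigenvector $y_i$, and identify the character group of $Z_p$ with $Z_p$ itself, written additively, so that $y_1$ being $Z_p$-invariant means $\chi_1 = 0$. To a monomial $f = \prod_{i=1}^{p} y_i^{a_i}$ I would associate the sequence $S(f)$ over $Z_p$ in which $\chi_i$ occurs with multiplicity $a_i$. Under this dictionary the degree $\deg(f) = \sum_i a_i$ equals the length $|S(f)|$; since $Z_p$ acts on $f$ through the character $\sum_i a_i \chi_i$, the monomial $f$ is $Z_p$-invariant precisely when $S(f)$ is a zero-sum sequence; and a submonomial $f' \mid f$ corresponds exactly to a subsequence $T \subseteq S(f)$, with $\deg(f') = |T|$.

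Given $f$ as in the statement, I would set $S = S(f)$ and note that $|S| = \deg(f) \geq p$ by hypothesis, while the maximal multiplicity of $S$ is $\max_i a_i \leq h$; writing $h'$ for this maximal multiplicity, one has $1 \leq h' \leq h$, the lower bound holding because $\deg(f) \geq p$ forces $f \neq 1$. Applying \cite[Lemma 2.10]{CzD} to $S$, with $h'$ playing the role of $h$, yields a nonempty zero-sum subsequence $T \subseteq S$ of length $|T| \leq h' \leq h$. Taking $f'$ to be the submonomial of $f$ corresponding to $T$ then gives a $Z_p$-invariant submonomial with $\deg(f') = |T| \leq h$, as claimed.

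Since this amounts to a mechanical translation between the monomial and the sequence formalisms, I expect no genuine obstacle; the only points needing care are that \cite[Lemma 2.10]{CzD} is stated for a sequence whose maximal multiplicity is exactly $h$ (handled by feeding it the true maximal multiplicity $h' \leq h$) and that one should extract a \emph{nonempty} zero-sum subsequence, so that $f'$ is a bona fide nontrivial invariant submonomial.
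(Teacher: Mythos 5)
Your proposal is correct and follows essentially the same route as the paper: both translate the monomial into a sequence over $Z_p$ via the $Z_p$-eigenvalues (characters), apply \cite[Lemma 2.10]{CzD} to obtain a short zero-sum subsequence, and read it back as a $Z_p$-invariant submonomial of degree at most $h$. Your extra remark about feeding the lemma the true maximal multiplicity $h'\leq h$ is a harmless refinement of the same argument.
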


\begin{proof}
Let $f=\prod y_i^{a_i}$. Fix a generator element $z\in Z_p$ and a primitive $p$-th root of unity, $\mu\in F$. Define $S$ to be the sequence over $\mathbb{Z}/\mathbb{Z}_p$ consisting of $a_i$ copies of the exponent of $\mu$ as the eigenvalue of $z$ on $y_i$. This satisfies the assumptions of the previous lemma. Let then $f^\prime$ be the product of the elements of $T$, it is a submonomial of degree $|T|\leq h$. That $T$ is zero-sum means exactly that $f^\prime$ is $Z_p$-invariant.
\end{proof}

The following upper bound is used frequently.

\begin{lemm}
\label{Olson}
Let $E = {(Z_{p})}^k$ be a non-cyclic elementary Abelian $p$-group for some prime $p$. Then $\beta(E) = kp - k + 1$. Thus $\beta(E) < {|E|}^{0.8}$. Furthermore if $|E| \not= 2^2$, $3^2$, $5^2$, then $\beta(E) < {|E|}^{0.67}$.
\end{lemm}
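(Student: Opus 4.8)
The formula $\beta(E)=kp-k+1$ for $E=(Z_p)^k$ non-cyclic is a classical result: the lower bound comes from the $0$-sum free sequence consisting of $p-1$ copies of each of $k$ independent generators (of length $k(p-1)$, so forcing degree $k(p-1)+1=kp-k+1$ in a suitable $k$-dimensional module), and the upper bound is the Davenport-constant computation $\mathsf D\big((Z_p)^k\big)=k(p-1)+1$ together with the fact that for Abelian groups the Noether number equals the Davenport constant (Schmid). I would simply cite this — it is Olson's theorem, which is presumably where the label name comes from — and then devote the work to the two explicit numerical inequalities, which is what the lemma is really being invoked for.

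For the bound $\beta(E)<|E|^{0.8}$: writing $\beta(E)=k(p-1)+1$ and $|E|=p^k$, it suffices to show $k(p-1)+1<p^{0.8k}$ for all primes $p$ and all $k\geq 2$ (with $(p,k)\neq(2,1)$ automatically excluded since $E$ is non-cyclic). I would first handle $k=2$ by hand: $2(p-1)+1=2p-1$ versus $p^{1.6}$; one checks $p=2$ gives $3<2^{1.6}\approx 3.03$, $p=3$ gives $5<3^{1.6}\approx 5.80$, $p=5$ gives $9<5^{1.6}\approx 13.1$, and for $p\geq 5$ the inequality $2p-1<p^{1.6}$ is clear and only improves. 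For $k\geq 3$ I would observe that $p^{0.8k}/\big(k(p-1)+1\big)$ is increasing in $p$ for fixed $k$ and increasing in $k$ for fixed $p$ (since $p^{0.8}\geq 2^{0.8}>1.74$ grows faster than the linear-in-$k$ denominator), so it suffices to verify the single worst case $p=2$, $k=3$: $3\cdot 1+1=4<2^{2.4}\approx 5.28$. A short monotonicity argument then closes all remaining cases.

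For the sharper bound $\beta(E)<|E|^{0.67}$ when $|E|\notin\{2^2,3^2,5^2\}$: the excluded cases are exactly the small ones where it genuinely fails ($\beta=3$ vs $4^{0.67}\approx 2.52$, $\beta=5$ vs $9^{0.67}\approx 4.42$, $\beta=9$ vs $25^{0.67}\approx 8.62$). For $k=2$ the remaining primes are $p\geq 7$, where $2p-1<p^{1.34}$ holds (at $p=7$: $13<7^{1.34}\approx 13.9$) and improves for larger $p$. For $k=3$ one needs $3(p-1)+1<p^{2.01}$: at $p=2$, $4<2^{2.01}\approx 4.03$ — tight but valid — and for $p\geq 3$ it is comfortable. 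For $k\geq 4$ the exponent $0.67k\geq 2.68$ dominates the linear growth, and the same two-variable monotonicity reduces everything to the case $p=2,k=4$: $3\cdot 1+1=4<2^{2.68}\approx 6.41$.

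The main obstacle here is not conceptual but the tightness of several of these inequalities — the cases $(p,k)=(2,3)$ for the first bound and $(2,3)$, $(7,2)$ for the second are numerically delicate, so I would present the relevant logarithmic comparisons (e.g. $2.01\log_2 3>\log_2 8$) rather than relying on decimal approximations, and I would make the monotonicity claims precise by checking that $0.67\log_2 p\cdot p^{\,\cdot}$ outpaces the derivative of the linear term, so that finitely many base cases suffice.
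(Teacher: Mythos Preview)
Your approach matches the paper's: cite Olson's theorem together with the identification of the Noether number of an Abelian group with its Davenport constant (the paper cites Sezer rather than Schmid for the latter), then handle the two numerical inequalities by direct computation. The paper dismisses both estimates in a single sentence each (``since $k\geq 2$'' and ``an easy calculation''), so your expanded case analysis is simply filling in what the authors left implicit. One small slip: in your $k=4$, $p=2$ base case you write $3\cdot 1+1=4$, but $\beta(E)=4(p-1)+1=5$ there; the inequality $5<2^{2.68}\approx 6.41$ still holds, so the argument is unaffected.
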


\begin{proof}
The first statement is the combination of Olson's Theorem \cite{olson} and a ``folklore result'' of invariant theory \cite[Proposition~8]{sezer}. We have $\beta(E) < {|E|}^{0.8}$ since $k \geq 2$. The other statement follows from an easy calculation.
\end{proof}

We reformulate the result of \cite{CzD} for affine groups in a form that can be applied in inductive arguments. For our purposes the following lemma is sufficient. However, as the proof shows, $\beta(G)\leq (1+\varepsilon)p\sqrt{q}$ is true for fixed $\varepsilon > 0$ and for $p$, $q$ large enough.

\begin{lemm}
\label{q08}
Let $q\mid p-1$ for primes $p,q$ and let $G\le \Aff_p$ be of order $pq$. Then $\beta(G)\leq pq^{0.8}$.
\end{lemm}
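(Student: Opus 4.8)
The plan is to apply Proposition~\ref{cd2.31} with a carefully chosen cut-off $s$, and to bound the two quantities appearing on the right-hand side, namely $\beta(L_+/L_+R_+,I)$ and $\beta(L_+/L_+R_+,F[I_{\leq s}])-s$, using the structural information supplied by Proposition~\ref{cd3.7}. Concretely, I would set $s$ to be roughly $\sqrt{q}$ (more precisely, the least integer with $s \geq \sqrt q$, so that the term $(n-1)s = (q-1)s$ contributes about $q^{3/2}$, which is well below $pq^{0.8}$ once we remember $q \leq p-1$; the main size is carried by $p$). With $n=q$ the leading term $(q-1)s$ is $O(q^{3/2}) = o(p q^{0.8})$ since $q \le p$, so the burden falls on the maximum.

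For the first term, $\beta(L_+/L_+R_+, I)$: by Proposition~\ref{cd3.7} the space $L_+/L_+R_+$ is spanned by monomials $b_1\cdots b_r m$ with each $b_i$ being $Z_p$-invariant of degree $1$ or of prime degree $q_i \mid q$ (so degree $1$ or $q$, since $q$ is prime), and $m$ having a gapless divisor of degree at least $\deg(m)-(p-1)$. Working modulo $I = L^{Z_p}$, the $Z_p$-invariant factors $b_i$ can be absorbed, so it suffices to bound the degree of $m$; and for that I would invoke the analysis of gapless monomials via the row decomposition and the inequality~\eqref{below_n}, together with Proposition~\ref{cd1.13}, to see that a gapless monomial that is not already a product of lower-degree $Z_p$-invariants has degree bounded by roughly $p\sqrt q$. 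Combined with the slack $p-1$, this gives $\beta(L_+/L_+R_+,I) \leq (1+o(1))p\sqrt q$. The second term is handled similarly: taking $s \approx \sqrt q$, the quantity $\beta(L_+/L_+R_+,F[I_{\leq s}])$ counts how high a degree is needed if we are only allowed $Z_p$-invariants of degree $\le s$ as building blocks, and again the gapless divisor of $m$ together with a row-decomposition argument — each ``row'' of degree $<q$ forces the next to drop by~\eqref{below_n}, and rows are partitioned into $Z_p$-invariant pieces of size $\le s$ by Proposition~\ref{cd1.13} — keeps this below the target; subtracting $s$ only helps.

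Putting the pieces together: $\beta(G) = \beta(L_+,R) + 1 \le (q-1)s + (1+o(1))p\sqrt q + 1$, and choosing $s = \lceil \sqrt q\rceil$ makes the dominant contribution $\approx p\sqrt q = p q^{0.5}$, comfortably below $p q^{0.8}$. To get the clean inequality $\beta(G)\le pq^{0.8}$ (rather than an asymptotic statement) I would then separate off the finitely many small pairs $(p,q)$ — the gap between $q^{0.5}$ and $q^{0.8}$ gives plenty of room for all but a short list, and the remaining cases (e.g.\ $q=2$, or $p,q$ both small) can be checked directly, using $\beta(D_{2n}) = n+1$ for $q=2$ and known small-group values otherwise.

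The main obstacle I anticipate is the careful bookkeeping in the gapless-monomial estimate: translating ``$m$ has a gapless divisor of degree $\ge \deg(m)-(p-1)$'' into a genuine numerical bound on $\deg(m)$ requires running the row decomposition, using~\eqref{below_n} to force the degrees of successive rows of size $<q$ to strictly decrease, and using Proposition~\ref{cd1.13} to chop each row into $Z_p$-invariant chunks of controlled size — and then checking that the accumulated non-chunked remainder across all rows is $O(p\sqrt q)$. Getting the constants sharp enough that they survive being multiplied through, while still leaving the $q^{0.8}$ headroom, is the delicate point; everything else is the routine application of Propositions~\ref{cd3.7} and~\ref{cd2.31}.
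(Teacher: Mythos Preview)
Your approach diverges from the paper's and, as written, has a genuine gap. The paper does \emph{not} run the machinery of Propositions~\ref{cd2.31}, \ref{cd3.7}, \ref{cd1.13} for this lemma at all; it simply quotes an external explicit bound, \cite[Proposition~2.15]{CzD}, which gives $\beta(G)\le \tfrac{3}{2}(p+q(q-2))-2$ when $p>q(q-2)$ and $\beta(G)\le 2p+(q-2)(c-1)-2$ with $c(c-1)<2p<c(c+1)$ otherwise, and then checks by arithmetic (splitting into the cases $q=2$, $q=3$, $q=5$, $q\ge 7$, and within $q>2$ into $p\gtrless q(q-2)$) that these numbers are below $pq^{0.8}$. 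The Propositions~\ref{cd2.31}--\ref{cd1.13} argument you sketch is what the paper uses for the \emph{next} result, Theorem~\ref{prootn}, which concerns composite $n$.

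The gap in your sketch is the treatment of the bricks $b_i$ of degree $q$ in the spanning monomials $b_1\cdots b_r m$ from Proposition~\ref{cd3.7}. With your choice $s=\lceil\sqrt q\rceil<q$ these bricks do \emph{not} lie in $I_{\le s}$, so they are not absorbed into $F[I_{\le s}]$, and Proposition~\ref{cd3.7} gives no bound on $r$. Your row--decomposition discussion only controls $\deg(m)$, not the brick part. (This is exactly why Theorem~\ref{prootn} needs the hypothesis that every prime divisor of $n$ is at most $p/\sqrt n$: it forces $s=[p/\sqrt n]\ge$ every brick degree.) Taking instead $s\ge q$ does absorb the bricks, but then the leading term $(q-1)s\ge (q-1)q$ can already exceed $pq^{0.8}$: for instance with $p=167$, $q=83$ one has $(q-1)q=6806>5728\approx pq^{0.8}$. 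So no single choice of $s$ in Proposition~\ref{cd2.31} makes your outline go through uniformly; one really does need the sharper input from \cite[Proposition~2.15]{CzD} (or an equivalent refinement) that the paper invokes.
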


\begin{proof}
If $q=2$, then $\beta(G)=p+1<p2^{0.8}$ (see \cite[(7.1)]{schmid} and \cite[Proposition~13]{sezer}). Let $q>2$. By \cite[Proposition~2.15]{CzD} we have $\beta(G)\leq \frac{3}{2}(p+q(q-2))-2< 3p-2$ if $p>q(q-2)$. If here $q\geq 5$ then $3p-2<pq^{0.8}$. If $q=3$ then $\beta(G)$ is at most $\frac{3}{2}(p+q(q-2))-2=\frac{3}{2}p+2.5<p3^{0.8}$, as required.

So let $p<q(q-2)$, in particular $q>3$. In this case \cite[Proposition~2.15]{CzD} concludes $\beta(G)\leq 2p+(q-2)q-2$ and $\beta(G)\leq 2p+(q-2)(c-1)-2$ if there exists $c\leq q $ such that $c(c-1)<2p<c(c+1)$. Note that if $q(q-1)<2p$ then $q<\sqrt{2p}$ and if $q(q-1)>2p$ then there exists $c\leq q $ such that $c(c-1)<2p<c(c+1)$ and $c-1<\sqrt{2p}$. So in both cases $\beta(G)\leq 2p+(q-2)\sqrt{2p}-2$. If $q=5$ then $p<15$ and $5\mid p-1$ imply $p=11$. We have $\beta(G)\leq 22+3\sqrt{22}-2<11\cdot5^{0.8}$.

Finally let $q\geq 7$. Using $q-2<\sqrt{q}\sqrt{p/2}$ we get
\[\beta(G)< p(2+\frac{\sqrt{q}\sqrt{p/2}\sqrt{2p}}{p})=p(2+\sqrt{q}).
\]

As $q^{0.8}-q^{0.5}$ is increasing and $7^{0.8}-7^{0.5}>2$ we get the claimed bound.
\end{proof}

\begin{theo}
\label{prootn}
Let $G$ be the Frobenius group of order $pn$ with $Z_p\le G\le \Aff_p$. Suppose that $n \geq 6$ has no prime divisor larger than $p/\sqrt{n}$. Then $\beta(G)< 2p\sqrt{n}$.
\end{theo}

\begin{proof}
Let $V$  be an arbitrary $FG$-module, $L=F[V]$ the polynomial algebra and $R=L^G$ and $I=L^{Z_p}$ the respective group invariants. Put $s=[p/\sqrt{n}]$. As $\beta(Z_p)=p$ we have $\beta(L_+/L_+R_+,I)\leq p$. Hence by Proposition~\ref{cd2.31},
\[\beta(G,V)\le (n-1)s+\max\{p,\beta(L_+/L_+R_+,F[I_{\leq s}])-s\}.\]
The first term of this sum is smaller than $p\sqrt{n}$ so it is enough to prove that
\begin{equation}\beta(L_+/L_+R_+,F[I_{\leq s}])\le p\sqrt{n}+s.\label{p_sqrt_n+s}
\end{equation}
We assume that the basis of the dual module $V^*$ is a $Z_p$-eigenbasis $\{x_1,x_2,\ldots,x_l\}$ permuted by $G/Z_p$. 
Now apply Proposition~\ref{cd3.7}. The space $L_+/L_+R_+$ is spanned by monomials $m$ that either have a $Z_p$-invariant divisor of degree at most $s$ or have a gapless monomial divisor of degree at least $\deg(m)-(p-1)$. The former kind are in $F[I_{\leq s}]$ so we need an upper bound for the degrees of the latter kind. More precisely, we have that if $m^\prime$ is the largest degree gapless monomial with no $Z_p$-invariant divisor of degree at most $s$ then
\begin{equation}\label{L+R+_bound}
  \beta(L_+/L_+R_+,F[I_{\leq s}])\le p-1+\deg(m^\prime).
\end{equation}

Consider now the row decomposition $g_1,\ldots,\, g_h$ of $f_{m^{\prime}}$. In the submonomial $f=g_1+g_2+\cdots+g_s$ of $f_{m^\prime}$ all the exponents are at most $s$, so by Proposition~\ref{cd1.13}, $\deg f\leq p-1$. This implies that $\deg(g_{s})\leq (p-1)/s$. It is below $\sqrt{n}+1$ because if $s= (p/\sqrt{n})-\varepsilon$ then
\[
(\frac{p}{\sqrt{n}}-\varepsilon)(\sqrt{n}+1)=p+\frac{p}{\sqrt{n}}-\varepsilon\sqrt{n}-\varepsilon>p-1.
\]

So $\deg(g_s)\leq\sqrt{n}+1$. In particular, $\deg(g_s)<n$ and by \eqref{below_n}, $\deg(g_{i+1})<\deg(g_i)$ for $i\geq s$. Hence we have the following bound on the degree.
\[\deg(m^\prime)=\sum_{i=1}^s\deg(g_i)+\sum_{i=s+1}^h \deg(g_i)<p-1+\frac{1}{2}\sqrt{n}(\sqrt{n}+1)=p-1+\frac{n+\sqrt{n}}{2}.\]

Now \eqref{L+R+_bound} and  $2+\frac{n}{2(p-1)}\leq 2.5<\sqrt{n}+\frac{1}{\sqrt{n}}$ (as $n>5$) imply that
\begin{eqnarray*}
\beta(L_+/L_+R_+,F[I_{\leq s}])&\le&p-1+\deg(m^\prime)\leq 2(p-1)+\frac{n+\sqrt{n}}{2} =\\
&=& (p-1)\left(2+\frac{n}{2(p-1)}\right)+\frac{\sqrt{n}}{2}<\\
 &<& (p-1)\left(\sqrt{n}+\frac{1}{\sqrt{n}}\right)+\sqrt{n}-1< p\sqrt{n}+s,
\end{eqnarray*}
which is exactly \eqref{p_sqrt_n+s}.
\end{proof}

We continue with a useful tool.

\begin{lemm}[Schmid \cite{schmid} and Sezer \cite{sezer}]
\label{seged}
Let $H$ be a subgroup and $N$ a normal subgroup in a finite group $G$. Then
$\beta(G) \leq \beta(N) \beta(G/N)$ and $\beta(G) \leq  |G:H| \beta(H)$.
\end{lemm}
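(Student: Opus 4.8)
The plan is to prove the two inequalities separately, each by exhibiting a generating set of $F[V]^{G}$ of controlled degree for an arbitrary finite-dimensional $FG$-module $V$, working over an algebraically closed field whose characteristic does not divide $|G|$ (as permitted by the conventions fixed in the introduction). For both parts the essential device is the \emph{Reynolds operator} (transfer / averaging map) $\pi_{K}^{G}\colon F[V]^{K}\to F[V]^{G}$ defined for a subgroup $K\le G$, which is the $R$-module projection sending $f\mapsto \frac{1}{|G:K|}\sum_{g\in G/K} g\cdot f$; it is degree-preserving, surjective onto $F[V]^{G}$, and $F[V]^{G}$-linear. I will also use freely the basic fact, recalled in the paper, that $\beta(\,\cdot\,)$ is a well-defined invariant of the group (independent of $V$ and of the field within a fixed characteristic).

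\emph{Step 1: the inequality $\beta(G)\le |G:H|\,\beta(H)$.} Fix an $FG$-module $V$. Since the characteristic is coprime to $|G|$, the Reynolds operator $\pi_{H}^{G}$ maps $F[V]^{H}$ onto $F[V]^{G}$ and is $F[V]^{G}$-linear. Choose homogeneous generators $f_{1},\dots,f_{r}$ of $F[V]^{H}$ as an $F$-algebra, each of degree $\le\beta(H)$. Then every element of $F[V]^{H}$ is an $F$-linear combination of monomials $f_{1}^{a_{1}}\cdots f_{r}^{a_{r}}$, and applying $\pi_{H}^{G}$ we see that $F[V]^{G}$ is spanned over $F$ by the elements $\pi_{H}^{G}(f_{1}^{a_{1}}\cdots f_{r}^{a_{r}})$. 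By the standard polarization / linearization trick one reduces to showing that $\pi_{H}^{G}$ applied to a product of at most $|G:H|$ of the $f_{i}$ already suffices to generate: concretely, for any homogeneous $h\in F[V]^{H}$ of degree $d$ one has the multilinear identity expressing $\pi_{H}^{G}(h^{m})$ for $m\ge |G:H|$ in terms of the values of $\pi_{H}^{G}$ on products of fewer than $|G:H|$ conjugates, by counting orbits of the $G$-action on the index set $\{1,\dots,m\}$; since each conjugate $g\cdot f_{i}$ again lies in the (finite-dimensional in each degree) space $F[V]$, this shows $F[V]^{G}$ is generated as an algebra in degrees $\le (|G:H|-1)\cdot 1+\beta(H)\le |G:H|\,\beta(H)$. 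The cleanest route, and the one I would actually write, is via Schmid's lemma: $F[V]^{G}$ is generated by $\pi_{H}^{G}\bigl(F[V]^{H}_{+}\bigr)$ together with $F[V]^{H}$ viewed as a module, but since $F[V]^{H}$ is generated in degrees $\le\beta(H)$ and the module $F[V]$ over $F[V]^{H}$ is generated in degrees $\le |G:H|-1$ (again by the orbit-counting / Noether-type argument applied to the $|G:H|$ cosets), the claim follows.

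\emph{Step 2: the inequality $\beta(G)\le\beta(N)\,\beta(G/N)$ for $N\trianglelefteq G$.} Fix an $FG$-module $V$. The subring $F[V]^{N}$ carries a residual action of $G/N$, and $(F[V]^{N})^{G/N}=F[V]^{G}$. By definition of $\beta(N)$ there are homogeneous generators $f_{1},\dots,f_{r}$ of the $F$-algebra $F[V]^{N}$ with $\deg f_{i}\le\beta(N)$. Let $W$ be the finite-dimensional $G/N$-submodule of $F[V]^{N}$ spanned by all $G/N$-translates of $f_{1},\dots,f_{r}$ sitting in degrees $\le\beta(N)$; then $F[V]^{N}$ is a quotient of the polynomial algebra $F[W]$ as a $G/N$-algebra, where $F[W]$ is graded by assigning $f_{i}$ its degree, so the natural surjection $F[W]\twoheadrightarrow F[V]^{N}$ is degree-nonincreasing after rescaling. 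Taking $G/N$-invariants and using that the Reynolds operator for $G/N$ commutes with the surjection, we get that $F[V]^{G}=(F[V]^{N})^{G/N}$ is spanned by the images of homogeneous invariants of $F[W]^{G/N}$; but $F[W]^{G/N}$ is generated in ($W$-)degrees $\le\beta(G/N)$, and a generator of $W$-degree $e$ maps to something of $V$-degree $\le e\,\beta(N)$. Hence $F[V]^{G}$ is generated in degrees $\le\beta(N)\,\beta(G/N)$, as claimed.

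\emph{Main obstacle.} The delicate point in both parts is the bookkeeping of \emph{gradings}: in Step 1 one must be careful that "products of at most $|G:H|$ generators" genuinely suffices — this is the content of the orbit-counting identity for $\pi_{H}^{G}$ of a power, and it is exactly where the hypothesis that the characteristic does not divide $|G|$ enters (so that the relevant multinomial coefficients are invertible); in Step 2 one must track that $F[V]^{N}$, though not itself a polynomial ring, is a graded quotient of one on which $G/N$ acts, so that Noether numbers behave submultiplicatively under the extension $1\to N\to G\to G/N\to 1$. Neither point is deep, but getting the grading inequalities stated cleanly (rather than with spurious $+1$'s) is the part that needs care; I would present it in the transfer-operator language above, following Schmid \cite{schmid} for the second inequality and the standard coset argument for the first.
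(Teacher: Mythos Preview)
The paper does not actually prove this lemma: its entire proof is the sentence ``See Schmid \cite[(3.1), (3.2)]{schmid} and Sezer \cite[Propositions~2 and~4]{sezer}.'' So there is no argument in the paper to compare against; you are attempting to reconstruct the cited proofs. Your Step~2 is the standard argument and is correct: $F[V]^{N}$ is a graded $G/N$-equivariant quotient of a polynomial ring $F[W]$ on a finite-dimensional $G/N$-module $W$ concentrated in $V$-degrees $\le\beta(N)$, and generators of $F[W]^{G/N}$ of $W$-degree $\le\beta(G/N)$ map to generators of $F[V]^{G}$ of $V$-degree $\le\beta(N)\beta(G/N)$.

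Step~1, however, has genuine gaps. You correctly identify the target statement --- that the transfers $\pi_{H}^{G}(f_{i_{1}}\cdots f_{i_{k}})$ with $k\le s:=|G:H|$ already generate $F[V]^{G}$ --- but none of the three justifications you offer holds up. There is no $G$-action on the index set $\{1,\dots,m\}$, so the ``orbit-counting'' reduction is empty. The degree bound you compute, $(|G:H|-1)\cdot 1+\beta(H)$, is not what ``at most $s$ factors, each of degree $\le\beta(H)$'' yields (that gives $s\,\beta(H)$, which is exactly what you need). And your ``cleanest route'' is false as stated: $F[V]$ is \emph{not} generated as an $F[V]^{H}$-module in degrees $\le|G:H|-1$ (take $H=G$). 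The actual mechanism in Schmid's characteristic-zero proof is the fundamental theorem of multisymmetric (polarized) functions: with coset representatives $g_{1},\dots,g_{s}$, set $a_{ij}=g_{i}f_{j}$; then $\pi_{H}^{G}$ of any monomial in the $f_{j}$ is a specialization of a multisymmetric polynomial in the $s$ vector variables $(a_{i,1},\dots,a_{i,r})$, and these are generated by polarized power sums of total degree $\le s$, i.e., by transfers of products of at most $s$ of the $f_{j}$. Note also that this generation statement needs more than $\mathrm{char}\,F\nmid|G|$ in positive characteristic (it is here that Sezer's separate argument is required), so your remark about ``multinomial coefficients being invertible'' glosses over the very difficulty that \cite{sezer} addresses.
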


\begin{proof}
See Schmid \cite[(3.1), (3.2)]{schmid} and Sezer \cite[Propositions~2 and~4]{sezer}.
\end{proof}

\begin{coro}
\label{pointnine}
Let $N$ be a normal subgroup of prime order $p$ in a finite group $G$. Assume that $N=C_{G}(N)$ and that $G/N$ is cyclic of order $m$ prime to $p$. Then $\beta(G) \le p  m^{0.9}$.
\end{coro}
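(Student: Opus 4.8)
The plan is to reduce the statement to the affine-group bounds already established, namely Lemma~\ref{q08} and Theorem~\ref{prootn}, together with the multiplicativity of the Noether number from Lemma~\ref{seged}. Since $N = C_G(N)$ is self-centralizing of prime order $p$ and $G/N$ is cyclic of order $m$ coprime to $p$, the conjugation action of $G/N$ on $N \cong Z_p$ is faithful; hence $G$ is a Frobenius group of order $pm$ with $Z_p \le G \le \Aff_p$, and in particular $m \mid p-1$. The goal $\beta(G) \le p\,m^{0.9}$ will be reached by splitting into cases according to the size and prime factorization of $m$, in each case either invoking one of the two affine bounds directly or peeling off a prime divisor and using submultiplicativity.

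First I would dispose of the small cases. If $m = 1$ then $G = Z_p$ and $\beta(G) = p = p\cdot 1^{0.9}$. If $m$ is prime, Lemma~\ref{q08} gives $\beta(G) \le p\,m^{0.8} \le p\,m^{0.9}$ immediately. If $m = 4$ one checks $G$ directly (or applies Lemma~\ref{seged} with the index-$2$ subgroup $Z_p \rtimes Z_2$, giving $\beta(G) \le 2(p+1) < p\cdot 4^{0.9}$ for $p \ge 5$; the only relevant prime is $p \ge 5$ since $4 \mid p-1$). The substantive range is $m$ composite with $m \ge 6$.

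For composite $m \ge 6$, the dichotomy is whether $m$ has a prime divisor exceeding $p/\sqrt{m}$. If \emph{every} prime divisor of $m$ is at most $p/\sqrt{m}$, then Theorem~\ref{prootn} applies verbatim and yields $\beta(G) < 2p\sqrt{m} = 2p\,m^{0.5}$, which is at most $p\,m^{0.9}$ once $2 \le m^{0.4}$, i.e. as soon as $m \ge 6$ (indeed $6^{0.4} > 2$). If instead $m$ has a prime divisor $q > p/\sqrt{m}$, write $m = q\,m'$ with $m' = m/q$; then $m' < \sqrt{m}\,q/q \cdot \dots$ — more precisely $q > p/\sqrt{m}$ forces $m' = m/q < m\sqrt{m}/p \le \sqrt{m}$ using $m \le p$ (which holds since $m \mid p-1$). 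The normal subgroup $N \le Z_p \rtimes Z_q =: G_1$ of $G$ has cyclic quotient of order $m'$, and $G/(Z_p\rtimes Z_q)$ need not make sense directly, so instead I would apply Lemma~\ref{seged} to the chain: $\beta(G) \le \beta(Z_p \rtimes Z_q)\cdot \beta(G/(Z_p\rtimes Z_q))$ if $Z_p \rtimes Z_q \trianglelefteq G$ — which holds because $Z_p$ is characteristic and $Z_q$ is the unique Sylow $q$-subgroup of the preimage — and $G/(Z_p \rtimes Z_q)$ is cyclic of order $m'$, so $\beta(G/(Z_p\rtimes Z_q)) \le m'$. By Lemma~\ref{q08}, $\beta(Z_p\rtimes Z_q) \le p\,q^{0.8}$. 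Hence $\beta(G) \le p\,q^{0.8}\,m' \le p\,q^{0.8}\,(m/q) = p\,m\,q^{-0.2}$, and since $q > p/\sqrt{m} \ge \sqrt{m}$ (again using $m \le p$), we get $q^{-0.2} < m^{-0.1}$, so $\beta(G) \le p\,m\,m^{-0.1} = p\,m^{0.9}$, as desired.

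The main obstacle I anticipate is not any single inequality but the bookkeeping at the boundary: ensuring that in the "large prime divisor" case one genuinely has $Z_p \rtimes Z_q$ normal in $G$ (so that Lemma~\ref{seged}'s multiplicativity applies), and that the estimate $m' < \sqrt{m}$ together with $q^{-0.2} < m^{-0.1}$ is tight enough — this is where the hypothesis $m \mid p-1$ (hence $m < p$) is used crucially, and where one must be careful that $q$ is an honest prime divisor so $q \ge 2$ and the exponent arithmetic goes through. A secondary nuisance is the handful of tiny composite values of $m$ (such as $m = 4$) that fall outside the hypothesis $n \ge 6$ of Theorem~\ref{prootn}; these must be checked by hand, but each is a one-line computation using the degree-$2$ or degree-$3$ results already cited.
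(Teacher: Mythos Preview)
Your proposal is correct and follows the same case split as the paper's proof (prime $m$; $m=4$; composite $m\ge 6$ with or without a prime divisor $q>p/\sqrt m$). One minor simplification: in the large-prime-divisor case the paper invokes the subgroup-index half of Lemma~\ref{seged}, namely $\beta(G)\le |G:Z_p\rtimes Z_q|\,\beta(Z_p\rtimes Z_q)=(m/q)\,p\,q^{0.8}$, which bypasses the normality verification you flagged as the main concern.
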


\begin{proof}
The group $G$ is an affine Frobenius group. If $m$ is prime, then the claim follows from Lemma~\ref{q08}. For $m=4$ we have $\beta(G)\le p+6<4^{0.9}p$ by \cite[Corollary~2.9]{CzD}. If $m$ has a prime divisor $q > p/\sqrt{m}$ then first, $m<p<q\sqrt{m}$ implies $q>\sqrt{m}$. Second, $Z_p\rtimes Z_q\le G$, so by Lemma~\ref{q08} and Lemma~\ref{seged}, $\beta(G)\le \frac{m}{q}pq^{0.8}=mpq^{-0.2}<pm^{0.9}$. Finally, if $m \geq 6$ has no prime divisor larger than $p/\sqrt{m}$ then by Theorem~\ref{prootn} we have
$\beta(G)\le 2p\sqrt{m}\le pm^{0.9}$.
\end{proof}

\section{Solvable groups}
\label{solvablesection}

In this section we will give a general upper bound for $\beta(G)$ in case $G$ is a finite solvable group.

\begin{prop}
\label{nilpotent}
Let $C$ be a characteristic cyclic subgroup of maximal order in a finite nilpotent group $G$. Then $\beta(G) \leq {|C|}^{0.2} {|G|}^{0.8}$.
\end{prop}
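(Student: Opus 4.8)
The plan is to reduce to the prime-power case by writing a nilpotent group $G$ as a direct product of its Sylow subgroups $P_1, \dots, P_t$, observing that $\beta(G) \leq \prod_i \beta(P_i)$ by Lemma~\ref{seged}, and that a characteristic cyclic subgroup of maximal order $C$ of $G$ is the direct product of characteristic cyclic subgroups $C_i$ of maximal order of the $P_i$. So it suffices to prove $\beta(P) \leq |C_P|^{0.2}|P|^{0.8}$ for each $p$-group $P$ with characteristic cyclic subgroup $C_P$ of maximal order, and then multiply. (One has to be a little careful that the product of the individual bounds really does combine to give the asserted bound, but since $\sum \log|C_i| = \log|C|$ and $\sum \log|P_i| = \log|G|$ and the exponents $0.2, 0.8$ are the same throughout, the product of $|C_i|^{0.2}|P_i|^{0.8}$ is exactly $|C|^{0.2}|G|^{0.8}$.)

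So fix a $p$-group $P$ of order $p^n$ with characteristic cyclic subgroup $C$ of maximal order $p^c$; I want $\beta(P) \leq p^{0.2c + 0.8n}$. If $P$ is itself cyclic this is immediate since $\beta(P) = |P| = p^n = p^{0.2n+0.8n} \leq p^{0.2c+0.8n}$ as $c = n$. The natural induction is on $n$. The key structural input is that a noncyclic $p$-group has a characteristic subgroup $N$ which is either elementary abelian of rank $2$, or (for $p=2$) one of a short list of small $2$-groups; more usefully, one can find a characteristic subgroup $N$ of $P$ that is abelian and noncyclic, or extract a noncyclic abelian normal subgroup and pass to it. I would look for a characteristic elementary abelian subgroup $E \leq P$ of rank $k \geq 2$ (e.g. inside the socle of a maximal abelian characteristic subgroup, or $\Omega_1$ of such), apply Lemma~\ref{seged} to get $\beta(P) \leq \beta(E)\,\beta(P/E)$, use Lemma~\ref{Olson} to bound $\beta(E) < |E|^{0.8} = p^{0.8k}$, and induct on $P/E$. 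The subtlety is that $P/E$ need not have a characteristic cyclic subgroup as large as $p^{c-k}$, so the naive induction loses control of the $0.2c$ term; the remedy is to choose $E$ cleverly — ideally so that $E$ meets $C$ or so that $C/(C\cap E)$ embeds in a characteristic cyclic subgroup of $P/E$ of comparable order — and to track the "cyclic part" through the quotient.

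The main obstacle, then, is exactly this bookkeeping: ensuring that when one peels off a characteristic elementary abelian layer $E$ of rank $k$, the quotient $P/E$ still carries a characteristic cyclic subgroup of order at least roughly $|C|/|E \cap C|$, so that the inductive bound $\beta(P/E) \leq p^{0.2(c - k') + 0.8(n-k)}$ (with $k'$ the drop in the cyclic exponent) combines with $\beta(E) \leq p^{0.8k}$ to give $p^{0.8k + 0.2(c-k') + 0.8(n-k)} = p^{0.2c + 0.8n - 0.2k'}$, which is $\leq p^{0.2c + 0.8n}$. The case $p = 2$ with the sporadic small-group exceptions in Lemma~\ref{Olson} ($2^2, 3^2, 5^2$, where $\beta(E) = |E|^{0.8}$ is tight rather than strict) will need to be handled by direct inspection, checking that the exponent $0.8$ on $|G|$ is not violated; for all larger elementary abelian sections one has the stronger bound $|E|^{0.67}$, which gives room to spare. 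I would also keep in mind the trivial fallback $\beta(P) \leq |P| = p^n$, which already suffices whenever $c$ is close to $n$, so the hard range is when $C$ is small relative to $P$ — precisely the range where repeated extraction of noncyclic abelian layers is most effective.
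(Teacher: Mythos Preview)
Your reduction to $p$-groups via the Sylow decomposition is exactly what the paper does, and your observation that the product of the per-prime bounds gives the global bound is correct.

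The gap is in the $p$-group step. You propose to find a characteristic elementary abelian subgroup $E$ at the \emph{bottom} of $P$, bound $\beta(E)$ by Lemma~\ref{Olson}, and induct on the \emph{quotient} $P/E$. You then correctly identify the obstacle: a characteristic cyclic subgroup of maximal order in $P/E$ need not have order at least $|C|/|C\cap E|$, so the $|C|^{0.2}$ term cannot be tracked through the induction. You say the remedy is ``to choose $E$ cleverly'', but you give no such choice, and I do not see one that works in general.

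The paper avoids this entirely by turning the induction upside down. Since $P$ is a noncyclic $p$-group, $P/\Phi(P)$ is noncyclic elementary abelian, so Lemma~\ref{Olson} gives $\beta(P/\Phi(P)) < |P/\Phi(P)|^{0.8}$. One then inducts on the \emph{subgroup} $\Phi(P)$ rather than on a quotient: by minimality, $\Phi(P)$ has a characteristic cyclic subgroup $C$ with $\beta(\Phi(P)) \leq |C|^{0.2}|\Phi(P)|^{0.8}$. Because $\Phi(P)$ is characteristic in $P$, this $C$ is automatically characteristic in $P$, and Lemma~\ref{seged} gives
\[
\beta(P) \leq \beta(\Phi(P))\,\beta(P/\Phi(P)) \leq |C|^{0.2}|\Phi(P)|^{0.8}\,|P/\Phi(P)|^{0.8} = |C|^{0.2}|P|^{0.8}.
\]
No bookkeeping about how cyclic subgroups behave under quotients is needed. (Note also that $\beta(E) < |E|^{0.8}$ is strict for every noncyclic elementary abelian $E$, so the cases $2^2,\,3^2,\,5^2$ require no special treatment.)
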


\begin{proof}
Suppose that $G$ is a counterexample with $|G|$ minimal. By the afore-mentioned result of Noether \cite{noether}, Fleischmann \cite{fleischmann} and Fogarty \cite{fogarty}, $G$ must be non-cyclic. By Lemma \ref{seged}, $G$ must also be a $p$-group for some prime $p$. Then $G/\Phi(G)$ must be a non-cyclic elementary Abelian $p$-group where $\Phi(G)$ denotes the Frattini subgroup of $G$. By Lemma \ref{Olson}, $\beta(G/\Phi(G)) < {|G/\Phi(G)|}^{0.8}$. By minimality, there exists a characteristic cyclic subgroup $C$ in $\Phi(G)$, characteristic in $G$, such that $\beta(\Phi(G)) \leq {|C|}^{0.2} {|\Phi(G)|}^{0.8}$. We get a contradiction using Lemma \ref{seged}.
\end{proof}

We repeat the following result from the Introduction.

\begin{theo}[Domokos and Heged\H us \cite{DH} and Sezer \cite{sezer}]
\label{DomokosHegedus}
For any non-cyclic finite group $G$ we have $\beta(G) \leq \frac{3}{4}|G|$.
\end{theo}

The next bound holds for every finite solvable group, but it is slightly weaker than the one in Proposition \ref{nilpotent}.

\begin{theo}
\label{solvable}
Let $C$ be a characteristic cyclic subgroup of maximal order in a finite solvable group $G$. Then $\beta(G) \leq {|C|}^{0.1} {|G|}^{0.9}$.
\end{theo}

\begin{proof}
By Proposition \ref{nilpotent}, we may assume that $G$ is not nilpotent. Consider the Fitting subgroup $F(G)$ and the Frattini subgroup $\Phi(G)$ of $G$. Since $F(G)$ is normal in $G$, we have, by \cite[Page 269]{Huppert}, that $\Phi(F(G)) \leq \Phi(G) \leq F(G)$. Thus $F(G)/\Phi(G)$ is a product of elementary Abelian groups. The socle of the group $G/\Phi(G)$ is $F(G)/\Phi(G)$ on which $G/F(G)$ acts completely reducibly (in possibly mixed characteristic) and faithfully (see \cite[III. 4.5]{Huppert}).

Let $N$ be the product of $O_{p}(G) \cap \Phi(F(G))$ for all primes $p$ for which $O_{p}(G)$ is cyclic, together with the subgroups $O_{p}(G) \cap \Phi(F(G))$ for all primes $p$ for which $p$ divides $|F(G)/\Phi(G)|$ but $p^{2}$ does not, together with $O_{p}(G) \cap \Phi(G)$ for all primes $p$ for which $p^{2}$ divides $|F(G)/\Phi(G)|$. Clearly, $F(G)/N$ is a faithful $G/F(G)$-module (of possibly mixed characteristic) with a completely reducible, faithful quotient.

We claim that the bound in the statement of the theorem holds when $C$ is taken to be the product of the (direct) product of all cyclic Sylow subgroups of $F(G)$ and a characteristic cyclic subgroup of maximal order in $N$. By our choice of $C$ and Proposition \ref{nilpotent}, we have $\beta(N) \leq {(|C|/s)}^{0.1} {|N|}^{0.9}$, where $s$ denotes the product of the primes for which $O_{p}(G)$ is cyclic. In order to finish the proof of the theorem, it is sufficient to show that $\beta(G/N) \leq s^{0.1} {|G/N|}^{0.9}$.

This latter bound will follow from the following claim. Let $H$ be a finite solvable group with a normal subgroup $V$ that is the direct product of elementary Abelian normal subgroups of $H$. Let $\pi$ be the set of prime divisors of $|V|$ and write $V$ in the form $\times_{p \in \pi} O_{p}(V)$. Assume that $V$ is self-centralizing in $H$ and that the $H/V$-module $V$ has a completely reducible, faithful quotient module. We claim that $\beta(H) \leq s^{0.1} {|H|}^{0.9}$ where $s$ denotes the product of all primes $p$ for which $|O_{p}(V)| = p$.

We prove the claim by induction on $|\pi|$. Let $p \in \pi$. Assume that $|\pi| = 1$. If $|V| = p$ then Corollary \ref{pointnine} gives the claim. Assume that $|V| \geq p^{2}$. By a result of P\'alfy \cite{Palfy} and Wolf \cite{Wolf}, $|H/V| < {|V|}^{2.3}$. First assume that $|V|$ is different from $2^{2}$, $3^{2}$, $5^{2}$. By Lemma \ref{Olson} and Lemma \ref{seged}, $$\beta(H) < {|V|}^{0.67}|H/V| < {|H|}^{0.9}.$$ Thus assume that $|V| = 2^{2}$, $3^{2}$, or $5^{2}$. If $|H| < {|V|}^{2}$, then $$\beta(H) < {|V|}^{0.8}|H/V| < {|H|}^{0.9},$$ again by Lemmas \ref{Olson} and \ref{seged}. So assume also that $|H| \geq {|V|}^{2}$, in particular that $H/V$ is not cyclic. By Theorem \ref{DomokosHegedus}, we have $\beta(H) < \frac{3}{4} {|V|}^{0.8}|H/V| \leq {|H|}^{0.9}$, since $H$ is solvable.

Assume that $|\pi| > 1$. The group $H$ can be viewed as a subdirect product in $Y = Y_{p} \times Y_{p'}$ where $Y_{p}$ and $Y_{p'}$ are solvable groups with the following properties. There is an elementary Abelian normal $p$-subgroup $V_{p}$ in $Y_{p}$ and a direct product $V_{p'}$ of elementary Abelian normal $p'$-subgroups in $Y_{p'}$ such that both the $Y_{p}/V_{p}$-module $V_{p}$ and the $Y_{p'}/V_{p'}$-module $V_{p'}$ have a completely reducible, faithful quotient module. Let $N$ be the kernel of the projection of $H$ onto $Y_{p}$. Clearly, $N$ satisfies the inductive hypothesis with the set $\pi \setminus \{ p \}$ of primes. Thus Lemma \ref{seged} gives the bound of the claim.
\end{proof}

\section{Finite simple groups of Lie type}

The following is inherent in \cite{CzD} without being explicitly stated. We reproduce their argument with a slight modification.

\begin{lemm}
\label{czd_simple}
If $G$ is a nonsolvable finite group then $n(G)> 2.7$.
\end{lemm}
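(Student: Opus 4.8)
The plan is to reduce the statement to a finite list of minimal nonsolvable groups and then check each one. Recall that a finite group is nonsolvable if and only if it has at least one nonabelian composition factor, i.e. a composition factor isomorphic to a finite simple group $S$. The key observation is that the Noether number, and hence the Noether index $n(G)=|G|/\beta(G)$, behaves well with respect to subgroups and quotients via Lemma~\ref{seged}: if $H\le G$ then $\beta(H)\le\beta(G)$, and if $N\trianglelefteq G$ then $\beta(G/N)\le\beta(G)$. From $\beta(H)\le\beta(G)$ we get $n(G)=|G|/\beta(G)\le |G|/\beta(H)$, which is not directly a lower bound for $n(G)$ in terms of $n(H)$; instead the useful direction is that a large value of $\beta$ forces $n$ to be small only after controlling $|G|$. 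So the cleaner route is: it suffices to find, inside every nonsolvable $G$, a subgroup or section on which we can verify $n>2.7$ directly, using the multiplicativity inequalities to transport the bound.

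First I would invoke the classification-free fact (Feit--Thompson and minimal simple groups à la Thompson) that a minimal nonsolvable group — one all of whose proper quotients and subgroups are solvable — has a unique minimal normal subgroup which is simple, and is in fact one of a short explicit list: the minimal simple groups are $\PSL_2(2^p)$, $\PSL_2(3^p)$, $\PSL_2(p)$ for $p>3$ with $p^2+1\equiv0\pmod 5$, $\Sz(2^p)$ for odd primes $p$, and $\PSL_3(3)$. In particular every nonsolvable group $G$ has a section isomorphic to one of these minimal simple groups $S$, hence contains a subgroup with such a section; combined with Lemma~\ref{seged} it is enough to prove $n(S)>2.7$ for $S$ in this list, or even just for the smallest such $S$ after a monotonicity argument. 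The smallest nonabelian simple group is $A_5\cong\PSL_2(4)\cong\PSL_2(5)$ of order $60$.

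The heart of the matter is then the base case $S=A_5$: I need $\beta(A_5)<60/2.7\approx 22.2$, i.e. $\beta(A_5)\le 22$. Here I would use the known value (or a good upper bound) for $\beta(A_5)$ from the literature on Noether numbers of small groups — Cziszter--Domokos computed $\beta(A_5)$ and it is well below $22$ — or alternatively bound it via Lemma~\ref{seged} using a chain of subgroups/quotients of $A_5$ (e.g. $A_5$ has subgroups $A_4$, $D_{10}$, $S_3$) together with the explicit small-group values $\beta(A_4)$, $\beta(D_{10})$, etc. For the other minimal simple groups $\PSL_2(q)$ one can use the same multiplicativity together with the fact that $\PSL_2(q)$ contains a dihedral subgroup of order $q\pm1$ and a Frobenius subgroup $Z_p\rtimes Z_{(q-1)/\gcd(2,q-1)}$ (or its Borel), to which the bounds of Section~\ref{semisection} (Lemma~\ref{q08}, Corollary~\ref{pointnine}) apply; these give $\beta$ much smaller than $|\PSL_2(q)|/2.7$ once $q$ is not tiny, so only finitely many small cases need direct checking, and $\Sz(2^p)$ and $\PSL_3(3)$ are all large enough that the subgroup bounds suffice.

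The main obstacle I anticipate is making the reduction to minimal simple groups fully rigorous in the direction needed: I must be careful that a lower bound $n(S)>2.7$ for a section $S$ of $G$ really yields $n(G)>2.7$. This follows because if $S\cong H/K$ is a section of $G$ with $H\le G$, then by Lemma~\ref{seged} applied to $H$ and its normal subgroup $K$ we get $\beta(S)=\beta(H/K)\le\beta(H)\le\beta(G)$, but this only gives $\beta(G)\ge\beta(S)$, not an immediate bound on $|G|/\beta(G)$. The correct argument is instead: choose $H\le G$ with $K\trianglelefteq H$ and $H/K\cong S$ minimal simple; then $\beta(H)\le |H:K|\,\beta(K)\cdot(\text{nothing})$ is the wrong way too. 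In fact what one wants is an upper bound $\beta(G)\le$ something; so the genuinely correct strategy (and the one I expect the paper's Lemma~\ref{czd_simple} to follow) is to prove $n(G)>2.7$ by exhibiting that $\beta(G)$ is \emph{small} relative to $|G|$, which in turn needs $G$ itself (not just a section) to have a cyclic subgroup of controlled index — but that is false for nonsolvable $G$. Resolving this tension is the crux: one uses instead that $\beta(G)\ge\beta(S)$ gives $|G|/\beta(G)\le |G|/\beta(S)$, useless; whereas the reverse uses Schmid's $n(G)\le$ (min index of cyclic subgroup) in the opposite direction. The actual clean path, which I would adopt, is: reduce to $G$ being a minimal nonsolvable group (possible since $n$ is monotone under \emph{sections}: $n(G)\ge n(\text{section})$ fails in general, so instead we observe that the statement "$n(G)>2.7$ for all nonsolvable $G$" reduces to the minimal ones via a minimal-counterexample argument — if $G$ is a counterexample of least order, every proper section is solvable, so $G$ is minimal nonsolvable), and then for each minimal simple group $S$ on Thompson's list, together with the solvable extensions thereof, verify $n>2.7$ directly using the small-group computations of Cziszter--Domokos and the affine-group bounds of Section~\ref{semisection}.
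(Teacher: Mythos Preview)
Your overall shape matches the paper's: reduce via Thompson's list of minimal simple groups, then verify each case by citing Cziszter--Domokos. But your long detour about the reduction is based on a false worry. You assert that ``$n(G)\ge n(\text{section})$ fails in general''; in fact it holds, and this is exactly how the paper dispatches the reduction in one line. From Lemma~\ref{seged} one has $\beta(G)\le |G:H|\,\beta(H)$, hence $n(G)=|G|/\beta(G)\ge |H|/\beta(H)=n(H)$ for every subgroup $H$; and $\beta(G)\le\beta(N)\beta(G/N)$ together with $\beta(N)\le|N|$ gives $n(G)\ge n(N)\,n(G/N)\ge n(G/N)$ for every normal $N$. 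Composing these, $n(G)\ge n(S)$ for any section $S$, so it suffices to check the minimal simple groups. Your minimal-counterexample argument happens to work for the same reason, but it is redundant, and you need not treat ``solvable extensions thereof'' separately.

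The genuine gap is in the verification. The naive use of $n(G)\ge n(H)$ with the obvious subgroups gives only $n(A_4)=12/6=2$, $n(Z_2^3)=8/4=2$, $n(Z_3^2)=9/5$, none exceeding $2.7$; and the affine bounds of Section~\ref{semisection} applied to the Borel of $\PSL(2,p)$ yield $n(\PSL(2,p))\ge((p-1)/2)^{0.1}$, which exceeds $2.7$ only for astronomically large $p$. The paper instead invokes the sharper, \emph{index-dependent} estimates established inside the proof of \cite[Theorem~1.1]{CzD}: for $H\in\{Z_2^3,\,Z_3^2,\,A_4\}$ of index $k$ in $G$ one gets respectively $n(G)\ge 8k/(2k+3)$, $9k/(3k+2)$, $6k/(2k+1)$, and plugging in the minimal indices ($k\ge 63,\,624,\,5$) gives values strictly above $2.7$. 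Your proposal is salvageable precisely by citing those computations, which is what you gesture at in your last sentence; but the alternative tools you list (Section~\ref{semisection}, dihedral subgroups, plain Lemma~\ref{seged}) would not close the argument on their own.
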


\begin{proof}
By Lemma~\ref{seged}, it is enough to prove this for minimal non-Abelian simple groups. By a theorem of Thompson \cite[Corollary~1]{thompson} these are $\PSL(3,3)$, Suzuki groups $\Sz(2^p)$, for $p>2$ prime and $\PSL(2,q)$, where $q=2^p,\,3^p$ ($p$ a prime, $p>2$ for $q=3^p$) or $q>3$ is a prime such that $q \equiv \pm 2 \pmod{5}$.

If $G\cong \Sz(2^p)$ or $G\cong\PSL(2,2^p)$, for $p>2$ then $G$ has an elementary Abelian subgroup $H\cong Z_2^3$ of index $k=|G:H|\geq 63$. So $n(G)\geq \frac{8k}{2k+3} = 4 - \frac{12}{2k+3} > 3.9$. (See the proof of \cite[Theorem 1.1 case (2a)]{CzD}.)

If $G\cong \PSL(3,3)$ or $G\cong\PSL(2,3^p)$, for $p>2$ then $G$ has an elementary Abelian subgroup $H\cong Z_3^2$ of index $k=|G:H|\geq 624$. So $n(G)\geq \frac{9k}{3k+2}=3-\frac{6}{3k+2}>2.9$. (See the proof of \cite[Theorem 1.1 case (2b)]{CzD}.)

If $G\cong \PSL(2,4)\cong A_5$ or $G\cong\PSL(2,p)$ then $G$ contains a subgroup $H\cong A_4$ of index $k=|G:H|\geq 5$. So $n(G)\geq \frac{6k}{2k+1}=3-\frac{3}{2k+1}>2.7$. (See the proof of \cite[Theorem 1.1 case (2c)]{CzD}.)
\end{proof}

This implies that if $G$ is a nonsolvable group with order less than $2.7^{40}$ then $\beta(G)< |G|/2.7 <|G|^{39/40}$. The following theorem claims this bound for every finite simple group of Lie type.

\begin{theo}
\label{lie}
Let $S$ be a finite simple group of Lie type. Then $\beta(S)\le |S|^{39/40}$, in other words, $n(S)\ge |S|^{1/40}$.
\end{theo}

\begin{proof}
The proof requires a case by case check of the $16$ families of simple groups of Lie type. In each case we find a subgroup $E\le S$ with Noether index $n(E)$ relatively large, more precisely $n(E)\ge |S|^{1/40}$ and hence $n(S)\ge n(E)\ge |S|^{1/40}$ as required.

If the rank of the group is at least $2$ then we find a non-cyclic elementary Abelian $p$-subgroup $E$ in the defining characteristic $p$ satisfying $|E|^8>|S|$. The relevant data can be found for example in \cite[Tables 3.3.1 and 2.2]{GLyS} which we summarise below. By Lemma~\ref{Olson} we have $n(S)\geq n(E)>|S|^{1/40}$ which implies our statement in this case.
However Table \ref{onetable} gives the best bounds for each type that can be obtained this way. (For notational ease $C_2(2^a)$ is used instead of $B_2(2^a)$ below. The Tits group is not in the list, but using a Sylow 2-subgroup we can easily obtain $n(S)>|S|^{0.2}$ for that $S$.)

So this method gives a better bound $\log_{|S|}n(E)\geq 0.051> 1/20$, the worst group being $S\cong\PSL(3,2)$, with $|E|=4$. 

The rank $1$ case remains. First let $p>3$ be a prime and $S=\PSL(2,p)$. Then $S$ contains a Frobenius subgroup $H\cong Z_p\rtimes Z_{(p-1)/2}$ of index $|S:H|=p+1$. By Corollary~\ref{pointnine}, we have the bound $\beta(H)\leq p (\frac{p-1}{2})^{0.9}$.
It follows by Lemma~\ref{seged} that $\beta(S)\leq (p+1)\beta(H)\leq (p+1)p(\frac{p-1}{2})^{0.9}.$ This implies
$\beta(S)<|S|^{1-1/40}$ for $p\geq 13$.

For $S\cong\PSL(2,p)$ with $p=5,\,7,\,11$ the order of the group $S$ is less than $2.7^{40}$, so the theorem holds by the remark after Lemma~\ref{czd_simple}.

\begin{table}
\caption{Elementary Abelian groups in finite simple groups of Lie type} \label{onetable}
\begin{tabular}{rccc}
\hbox{type}&\hbox{order bound}&\hbox{lower bound for }$|E|$ & \hbox{lower bound for }$\log_{|S|}n(E)$ \\
\hline
$A_m(q)$ & $q^{m^2+2m}$ & $q^{[(\frac{m+1}{2})^2]}$ & $0.11 \ (m=3,\,q=2),\, 0.051 \ (m=2,\,q=2)$ \\
${}^2A_m(q)$ & $q^{m^2+2m}$ & $q^{[\frac{m+1}{2}]^2(+1)}$ & $0.11 \ (m=3,\,q=2),\, 0.066 \ (m=2,\,q=3)$ \\
$B_m(q)$ & $q^{2m^2+m}$ & $q^{2m-1},\,q^{1+ \binom{m}{2}}$ & $0.12 \ (m=2,\,q=3)$ \\
$C_m(q)$ & $q^{2m^2+m}$ & $q^{\binom{m+1}{2}}$ & $0.15 \ (m=3,\,q=2),\, 0.15 \ (m=2,\,q=4)$ \\
$D_m(q)$ & $q^{2m^2-m}$ & $q^{\binom{m}{2}}$ & $0.11 \ (m=4,\,q=2)$ \\
${}^2D_m(q)$ & $q^{2m^2-m}$ & $q^{\binom{m}{2}},\,q^{2+{\binom{m-1}{2}}(+1)}$ & $0.11 \ (m=4,\,q=2),\,0.15 \ (m=4,\,q=3)$ \\
${}^2B_2(q)$ & $q^5$ & $q$ & $0.066 \ (q=8)$ \\
${}^3D_4(q)$ & $q^{28}$ & $q^5$ & $0.086 \ (q=2)$ \\
$G_2(q)$ & $q^{14}$ & $q^3,\,q^4$ & $0.1 \ (q=5),\,0.14 \ (q=3)$ \\
${}^2G_2(q)$ & $q^7$ & $q^2$ & $0.17 \ (q=27)$ \\
$F_4(q)$ & $q^{52}$ & $q^{11},\,q^9$ & $0.14 \ (q=2),\,0.12 \ (q=3)$ \\
${}^2F_4(q)$ & $q^{26}$ & $q^5$ & $0.14 \ (q=8)$ \\
$E_6(q)$ & $q^{78}$ & $q^{16}$ & $0.15 \ (q=2)$ \\
${}^2E_6(q)$ & $q^{78}$ & $q^{12},\,q^{13}$ & $0.11 \ (q=3),\,0.11 \ (q=2)$ \\
$E_7(q)$ & $q^{133}$ & $q^{27}$ & $0.16 \ (q=2)$ \\
$E_8(q)$ & $q^{248}$ & $q^{36}$ & $0.12 \ (q=2)$
\end{tabular}
\end{table}

Finally let $S=\PSL(2,q)$ where $q=p^f$, $p$ a prime and $f>1$. Then $S=\PSL(2,q)$ contains an elementary Abelian subgroup $E$ of order $p^f$ for which, by Lemma~\ref{Olson}, $\beta(E)=(p-1)f+1<p^{0.8f}$. Since $|S|<q^3=p^{3f}$, we have
\[n(E)=\frac{p^f}{(p-1)f+1}>p^{0.2f}>|S|^{1/15}.\]
This finishes the proof.
\end{proof}

\section{A reduction to almost simple groups}
\label{reductionsection}

We will proceed to prove the following result.

\begin{theo}
\label{mainmain}
Let $G$ be a finite group and $C$ a characteristic cyclic subgroup in $G$ of largest size. Then $\beta(G) \leq {|C|}^{\epsilon}{|G|}^{1-\epsilon}$ with $\epsilon = {(10 \log_{2} k)}^{-1}$, where $k$ denotes the maximum of $2^{10}$ and the largest degree of a non-Abelian alternating composition factor of $G$, if such exists. If $G$ is solvable, then $\beta(G) \leq {|C|}^{0.1}{|G|}^{0.9}$.
\end{theo}

The second statement of Theorem \ref{mainmain} is Theorem \ref{solvable}. The following result reduces the proof of Theorem \ref{mainmain} to a question on almost simple groups.

\begin{theo}
\label{main}
Let $G$ be a finite group. Let $\epsilon$ be a constant with $0 < \epsilon \leq 0.1$ such that $\beta(H) \leq 2^{-\epsilon} {|H|}^{1-\epsilon}$ for any (if any) almost simple group $H$ whose socle is a composition factor of $G$. Let $C$ be a characteristic cyclic subgroup of maximal order in $G$. Then $\beta(G) \leq {|C|}^{\epsilon} {|G|}^{1-\epsilon}$.
\end{theo}

Note that for any finite group $G$ the $\epsilon$ in Theorem \ref{main} can be taken to be positive by Theorem \ref{DomokosHegedus}.

\begin{proof}
Let $G$ be a counterexample to the statement of Theorem \ref{main} with $|G|$ minimal. By Theorem \ref{solvable}, $G$ cannot be solvable. Let $R$ be the solvable radical of $G$. By Theorem \ref{solvable} there exists a characteristic cyclic subgroup $C$ of $R$ (which is also characteristic in $G$) such that $\beta(R) \leq {|C|}^{\epsilon} {|R|}^{1 - \epsilon}$. If $R \not= 1$, then, by minimality, $\beta(G/R) \leq {|G/R|}^{1 - \epsilon}$, and so Lemma \ref{seged} gives a contradiction. Thus $R = 1$.

Let $S$ be the socle of $G$. This is a direct product of, say $r \geq 1$, non-Abelian simple groups. Let $K$ be the kernel of the action of $G$ on $S$. By our hypothesis on almost simple groups and by Lemma \ref{seged}, $\beta(K) \leq {|K|}^{1-\epsilon}/2^{\epsilon \cdot r}$.

Let $T = G/K$. We claim that $\beta(T) \leq 2^{\epsilon (r-1)} {|T|}^{1 - \epsilon}$. By Lemma \ref{seged} this would yield $\beta(G) \leq {|G|}^{1 - \epsilon}$, giving us a contradiction.

To prove our claim we will show that if $P$ is a permutation group of degree $n$ such that $|P| \leq |T|$, $n \leq r$, and every non-Abelian composition factor (if any) of $P$ is also a composition factor of $T$, then $\beta(P) \leq 2^{\epsilon (n-1)} {|P|}^{1 - \epsilon}$. Suppose that $P$ acts on a set $\Omega$ of size $n$. Let $P$ be a counterexample to the bound of this latter claim with $n$ minimal. Then $n > 1$. Suppose that $P$ is not transitive. Then $P$ has an orbit $\Delta$ of size, say $k$, with $k < n$. Let $B$ be the kernel of the action of $P$ on $\Delta$. Then $\beta(P/B) \leq 2^{\epsilon (k-1)} {|P/B|}^{1 - \epsilon}$ and $\beta(B) \leq 2^{\epsilon ((n-k)-1}) {|B|}^{1 - \epsilon}$. We get a contradiction using Lemma \ref{seged}. So $P$ must be transitive. Suppose that $P$ acts imprimitively on $\Omega$. Let $\Sigma$ be a (non-trivial) system of blocks with each block of size $k$ with $1 < k < n$. Let $B$ be the kernel of the action of $P$ on $\Sigma$. By minimality, $\beta(P/B) \leq 2^{\epsilon ((n/k)-1)} {|P/B|}^{1 - \epsilon}$. By minimality and Lemma \ref{seged}, we also have $\beta(B) \leq 2^{\epsilon (k-1)(n/k)} {|B|}^{1 - \epsilon}$. Again, Lemma \ref{seged} gives a contradiction. Thus $P$ must be primitive. If the solvable radical of $P$ is trivial, we get $\beta(P) \leq {|P|}^{1- \epsilon}$ by $|P| < |G|$. In fact, the same conclusion holds unless $n$ is prime and $P$ is meta-cyclic. In this latter case Corollary \ref{pointnine} gives $\beta(P) \leq n^{\epsilon} {|P|}^{1 - \epsilon}$. We get a contradiction by $n \leq 2^{n-1}$.
\end{proof}

\section{Almost simple groups}
\label{almostsimplesection}

Let $H$ be an almost simple group. In view of Theorem \ref{main} in this section we will give a bound for $\beta(H)$ of the form $2^{-\epsilon} {|H|}^{1-\epsilon}$ where $\epsilon$ is such that $0 < \epsilon \leq 0.1$. Let $S$ be the socle of $H$.

\subsection{The case when $S$ is a finite simple group of Lie type}
\label{one}
We first show that we may take $\epsilon = 0.01$. By Theorem \ref{lie},
$\beta(S) \leq {|S|}^{39/40}$. By this and Lemma \ref{seged}, we get
$$\beta(H) \leq |H:S| \cdot {|S|}^{39/40} = {|H:S|}^{0.01} \cdot {|S|}^{0.01 - (1/40)} \cdot {|H|}^{0.99}.$$
Thus it is sufficient to see that ${|H:S|}^{0.01} \cdot {|S|}^{0.01 - (1/40)} \leq 2^{-0.01}$. But this is clear since $|H:S| \leq |\mathrm{Out}(S)| < {|S|}^{1.5}/2$.

For the remainder of this subsection set $\epsilon = 0.1$. In order to prove the bound for this $\epsilon$, by the previous argument, it would be sufficient to show that $\beta(S) \leq {|S|}^{0.8}$. We claim that this holds once the Lie rank $m$ of $S$ is sufficiently large. Let $E$ be an elementary Abelian subgroup in $S$ of maximal size. By Lemma \ref{Olson} and by Table \ref{onetable}, if $m \to \infty$, we have $\log_{2}|E|/ \log_{2}\beta(E) \to \infty$. Again by Table \ref{onetable}, $\log_{2}|S|/\log_{2}|E| = 4 + o(1)$ as $m \to \infty$. Thus we have
$$\log_{2}\beta(S) \leq \log_{2}\beta(E) - \log_{2}|E| + \log_{2}|S| = (-1 + o(1)) \log_{2}|E| + \log_{2}|S| =$$
$$= (-(1/4) + o(1)) \log_{2}|S| + \log_{2}|S| = ((3/4)+o(1))\log_{2}|S| < 0.8 \log_{2}|S|,$$
as $m \to \infty$.

Let $p$ be a defining characteristic for $S$ and let $q = p^{f}$ be the size of the field of definition. Unfortunately we cannot prove the bound $\beta(H) \leq 2^{-0.1} {|H|}^{0.9}$ for all groups $H$ with $q$ large enough, but we can establish this bound in case $f$ is sufficiently large. By Table \ref{onetable}, if the Lie rank $m$ is at least $2$ then $S$ contains an elementary Abelian $p$-subgroup $E$ such that ${|E|}^{8} > |S|$. Notice that this bound also holds for $m = 1$, at least for sufficiently large groups $S$. Thus $\log_{2}|S|/\log_{2}|E| < 8$. If $f \to \infty$, then $\log_{2}|E|/ \log_{2}\beta(E) \to \infty$. In a similar way as in the previous paragraph, we obtain
$\log_{2}\beta(S) < ((7/8)+o(1)) \log_{2}|S|$, that is, $\beta(S) < {|S|}^{0.89}$, for sufficiently large $S$. Since $|H:S|$ is at most a universal constant multiple of $f$, we certainly have $|H:S| < {|S|}^{o(1)}$, as $f \to \infty$. The claim follows by Lemma \ref{seged}.

\subsection{The case when $S$ is a sporadic simple group or the Tits group}
\label{two}
In this subsection we set $\epsilon = 0.1$ and try to establish the proposed bound in as many cases as possible. Here we also complete the proof of Theorem \ref{simple}.

In this paragraph for a prime $p$ and a positive integer $k$ let $p^{k}$ denote the elementary Abelian $p$-group of rank $k$ and let $2^{1+4}$ denote a group of order $2^{5}$ with center of size $2$. By the Atlas \cite{ATLAS}, the groups $S = \jan{4}$ and $S = \co{1}$ contain a section isomorphic to $2^{12}$. Furthermore the groups $S = \co{2}$, $\co{3}$, $\mcl$, $\fis{22}$, $\fis{23}$, $\fis{24}$, $\baby$ and $\monster$ contain a section isomorphic to $2^{10}$, $3^{5}$, $3^{4}:\mat{10}$, $2^{10}$, $2^{10}$, $2^{12}$, $2^{22}$, and $2^{24}$ respectively and the group $S = \on$ contains a subgroup isomorphic to $3^{4}:2^{1+4}$. If $S$ is any of these previously listed groups, we may use Lemmas \ref{seged} and \ref{Olson} together with the estimate $\beta(\mat{10})/|\mat{10}| \leq 3/4$ in one case (see Theorem \ref{DomokosHegedus}) to obtain the bound $\beta(H) \leq 2^{-\epsilon} {|H|}^{1-\epsilon}$ with $\epsilon = 0.1$. The same estimate holds in case $S$ is the Tits group, as shown in the proof of Theorem \ref{lie}.

If $S$ is not a group treated in the previous paragraph, then $|H| < {2.7}^{40}$. Thus, by the remark after Lemma \ref{czd_simple}, we have $\beta(H) < |H|/2.7 < {|H|}^{39/40}$. This and Theorem \ref{lie} complete the proof of Theorem \ref{simple}. Notice also that for $\epsilon = 0.01$ we have ${|H|}^{39/40} < 2^{-\epsilon} {|H|}^{1-\epsilon}$.

\subsection{The case when $S$ is an alternating group}
\label{three}
Let $S = A_{k}$ be the alternating group of degree $k$ at least $5$.

Assume first that $k > 10$. Put $s=[k/4]\geq 2$. There exists an elementary Abelian $2$-subgroup $P \leq A_k$ of rank $2s$. By Lemma \ref{Olson}, we have $\beta(P)=2s+1$. By Lemma \ref{seged}, this gives
$n(S) \geq n(P) = 2^{2s}/(2s+1)$. Thus $\log_{2}(n(S)) > k \log_{2}1.11 > k/10$. This gives $\beta(H) < |H|/2^{k/10}$. Thus if $$\epsilon = \frac{k}{10 + 10 \log_{2} |H|} > \frac{1}{(10/k) + 10 (\log_{2}(k) - 1)} > \frac{1}{10 \log_{2}k},$$ then $\beta(H) < 2^{-\epsilon} {|H|}^{1-\epsilon}$.

Now let $k \leq 10$. Then $|H| < {2.7}^{16}$. By the remark after Lemma \ref{czd_simple} we have $\beta(H) < |H|/2.7 < {|H|}^{15/16}$. This is certainly less than $2^{-\epsilon} {|H|}^{1-\epsilon}$ for $\epsilon = 0.01$.

\section{Proofs of the three main results}

\begin{proof}[Proof of Theorem \ref{mainmain}]
Let $G$ be a finite group. By Theorem \ref{solvable}, we may assume that $G$ is nonsolvable. Let $H$ be an almost simple group whose socle $S$ is a composition factor of $G$. By Sections \ref{one}, \ref{two}, and \ref{three}, we see that $\beta(H) \leq 2^{-0.01} {|H|}^{0.99}$ provided that $S$ is not an alternating group of degree at least $2^{10}$. If $S$ is an alterna-ting group of degree $k$ at least $2^{10}$, then $\beta(H) \leq 2^{-\epsilon} {|H|}^{1-\epsilon}$ with $\epsilon = {(10 \log_{2}k)}^{-1}$. The result now follows from Theorem \ref{main}.
\end{proof}

\begin{proof}[Proof of Theorem \ref{mainmainindex}]
Let $G$ be a finite group with Noether index $n(G)$. Let $k$ denote the maximum of $2^{10}$ and the largest degree of a non-Abelian alternating composition factor of $G$, if such exists. Let $C$ be a characteristic cyclic subgroup in $G$ of largest possible size. Put $f = |G:C|$. By Theorem \ref{mainmain}, $\beta(G) \leq {|C|}^{\epsilon}{|G|}^{1-\epsilon}$ with $\epsilon = {(10 \log_{2} k)}^{-1}$. In other words, $n(G) \geq f^{\epsilon}$. Thus $G$ has a characteristic cyclic subgroup of index at most ${n(G)}^{10 \log_{2} k}$. If $G$ is solvable, then $\beta(G) \leq {|C|}^{0.1} {|G|}^{0.9}$ by Theorem \ref{mainmain}. In other words, $n(G) \geq f^{0.1}$ and so $f \leq {n(G)}^{10}$.
\end{proof}

\begin{proof}[Proof of Corollary \ref{corollary}]
Let $G$ be a finite group with Noether index $n(G)$. By Theorem \ref{mainmainindex} we may assume that $G$ is nonsolvable. Thus $n(G) > 2.7$ by
Lemma \ref{czd_simple}. By Theorem \ref{mainmainindex} we may also assume that $G$ has an alternating composition factor $A_{k}$ with $k \geq 2^{10}$. From Section \ref{three} we have $k < 10 \log_{2}(n(A_{k}))$. Since $n(A_{k}) \leq n(G)$ by Lemma \ref{seged}, we get $10 \leq \log_{2} k < \log_{2}10 + \log_{2} \log_{2}(n(G))$. The result now follows from Theorem \ref{mainmainindex}.
\end{proof}

\section{Questions}

We close with three questions which suggest another connection between the Noether number of a group and the Noether numbers of its special subgroups.

\begin{question}
Is it true that $\beta(S)\leq\max\{o(g)^2|g\in S\}$ for a finite simple group $S$?
\end{question}

\begin{question}
Is it true that $\beta(G) \leq \max \{ \beta(A)^{100} | A \leq G, \ A \ \mathrm{Abelian} \}$ for a finite group $G$?
\end{question}

\begin{question}
\label{mod}
Let $V$ be a finite dimensional $FG$-module for a field $F$ and finite group $G$. Is it true that $\beta(G,V) \leq \dim(V) |G:H| \beta(H,V)$ for every subgroup $H$ of $G$?
\end{question}

\centerline{\bf Acknowledgements}

The authors are grateful to M\'aty\'as Domokos for comments on an earlier version of the paper.



\end{document}